\documentclass[10pt]{amsart}

\usepackage{ amsmath,amssymb,verbatim,graphicx,amsthm,color}
\usepackage[top=1in, bottom=1in, left=1in, right=1in]{geometry}
\usepackage{cite}
\raggedbottom
 
 \allowdisplaybreaks[1]
\usepackage[sc]{mathpazo}
\linespread{1.5}         % Palatino needs more leading (space between lines)
\usepackage[T1]{fontenc}

\newcommand{\vertiii}[1]{{\left\vert\kern-0.25ex\left\vert\kern-0.25ex\left\vert #1 
    \right\vert\kern-0.25ex\right\vert\kern-0.25ex\right\vert}}
\newcommand{\norm}[1]{\left\lVert#1\right\rVert}

\newcommand{\RR}{\mathbb{R}}
\newcommand{\CC}{\mathbb{C}}
\newcommand{\ZZ}{\mathbb{Z}}
\newcommand{\QQ}{\mathbb{Q}}
\newcommand{\NN}{\mathbb{N}}
\newcommand{\TT}{\mathbb{T}}

\newcommand{\expec}{\mathbb{E}}

\theoremstyle{plain}

\newtheorem{theorem}{Theorem}[section]

\newtheorem{lemma}[theorem]{Lemma}
\newtheorem{cor}[theorem]{Corollary}

\usepackage{setspace}
\begin{document}

\title{Extension of Wiener-Wintner double recurrence theorem to polynomials}

\author{Idris Assani}
\address{Department of Mathematics, The University of North Carolina at Chapel Hill, 
Chapel Hill, NC 27599}
\email{assani@math.unc.edu}
\urladdr{http://www.unc.edu/math/Faculty/assani/} % Delete if not wanted.

\author{Ryo Moore}
\address{Department of Mathematics, The University of North Carolina at Chapel Hill, 
Chapel Hill, NC 27599}
\email{ryom@live.unc.edu}
\urladdr{http://ryom.web.unc.edu} % Delete if not wanted.

\begin{abstract}
We extend our result on the convergence of double recurrence Wiener-Wintner averages to the case where we have a polynomial exponent. We will show that there exists a single set of full measure for which the averages 
\[ \frac{1}{N} \sum_{n=1}^N f_1(T^{an}x)f_2(T^{bn}x)\phi(p(n)) \]
converge for any polynomial $p$ with real coefficients, and any continuous function $\phi: \TT \to \CC$. We also show that if either function belongs to an orthogonal complement of an appropriate Host-Kra-Ziegler factor that depends on the degree of the polynomial $p$, then the averages converge to zero uniformly for all polynomials. This paper combines the authors' previously announced work.
\end{abstract}

\maketitle

\section*{Notations/Conventions}
Unless specified otherwise, the following notations/conventions will be used throughout the paper.
\begin{itemize}
	\item $\RR_k[\xi]$ is a collection of degree-$k$ polynomials with real coefficients, whereas $\RR[\xi]$ is the collection of all the polynomials with real coefficients.
	\item$\mathcal{Z}_k$ is the $k$-th Host-Kra-Ziegler factor (cf. \cite{HostKraNEA, Ziegler}), whereas $\vertiii{\cdot}_{k+1}$ is the Gowers-Host-Kra seminorm (cf. \cite{Gowers, HostKraNEA}) that characterizes the factor $\mathcal{Z}_k$.
	\item Sometimes, we denote $e(\alpha) = e^{2\pi i \alpha}$.
	\item Functions $f_1$ and $f_2$ are taken to be real-valued. The results presented here can be easily extended to the case where they are complex-valued functions.
	\item When we have $A \lesssim_{B} C$, we mean that $A \leq DC$ for some real constant $D > 0$ that depends on $B$.
\end{itemize}

\section{Introduction}
The following extension on Bourgain's pointwise result on double recurrence \cite{BourgainDR} was proven in \cite{WWDR}.

\begin{theorem}\label{WWDR}
	Let $(X, \mathcal{F}, \mu, T)$ be a standard ergodic dynamical system, $a, b \in \ZZ$ such that $a \neq b$, and $f_1, f_2 \in L^\infty(\mu)$. Let 
	\[W_N(f_1, f_2,x ,t) = \frac{1}{N} \sum_{n=1}^{N} f_1(T^{an}x)f_2(T^{bn}x) e^{2\pi i n t}. \]
	\begin{enumerate}
		\item (Double Uniform Wiener-Wintner Theorem) If either $f_1$ or $f_2$ belongs to $\mathcal{Z}_2^\perp$, then there exists a set of full measure $X_{f_1, f_2}$ such that for all $x \in X_{f_1, f_2}$,
		\begin{equation*}\label{UniformDWW} \limsup_{N \to \infty} \sup_{t \in \RR} \left|W_N(f_1, f_2, x, t) \right| = 0. \end{equation*}
		\item (General Convergence) If $f_1, f_2 \in \mathcal{Z}_2$, then for $\mu$-a.e. $x \in X$, $W_N(f_1, f_2, x, t)$ converges for all $t \in \RR$.
	\end{enumerate}
\end{theorem}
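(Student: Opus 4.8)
The plan is to handle both parts through a single device: a van der Corput step replacing the oscillatory weight $e(nt)$ by shifted products of $f_1$ and $f_2$, after which Bourgain's pointwise double recurrence theorem \cite{BourgainDR} and the Host--Kra description of the characteristic factors of the two-term averages $\frac1N\sum_{n=1}^N g_1(T^{an}x)g_2(T^{bn}x)$ take over. Part (1) will need only the pointwise double recurrence theorem; part (2) will in addition require a maximal inequality for $W_N$ and the structure of $\mathcal{Z}_2$ as an inverse limit of $2$-step nilsystems \cite{HostKraNEA}, together with the convergence of Cesàro averages of nilsequences. Throughout, $a$ and $b$ are taken to be nonzero.

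For part (1), assume without loss of generality that $f_1\in\mathcal{Z}_2^\perp$, i.e. $\vertiii{f_1}_3=0$, and put $C=\|f_1\|_\infty^2\|f_2\|_\infty^2$. Applying van der Corput's inequality to $a_n=f_1(T^{an}x)f_2(T^{bn}x)e(nt)$ and using $a_n\overline{a_{n+h}}=e(-ht)\,F_{1,h}(T^{an}x)F_{2,h}(T^{bn}x)$, where $F_{1,h}(y)=f_1(y)f_1(T^{ah}y)$ and $F_{2,h}(y)=f_2(y)f_2(T^{bh}y)$, the unimodular factor $e(-ht)$ drops out upon taking absolute values and $\sup_t$. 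This leaves, for every $H\in\NN$ and every $x$ in a fixed full-measure set $X_0$ — namely the intersection over $h\in\NN$ of the sets on which \cite{BourgainDR} provides the limit $L_h(x):=\lim_{N\to\infty}\frac1N\sum_{n=1}^N F_{1,h}(T^{an}x)F_{2,h}(T^{bn}x)$ — the estimate
\[ \limsup_{N\to\infty}\,\sup_{t\in\RR}\left|W_N(f_1,f_2,x,t)\right|^2\;\le\;\frac{C}{H}+\frac{2}{H}\sum_{h=1}^{H}\left|L_h(x)\right|. \]
Since the Kronecker factor $\mathcal{Z}_1$ is characteristic in $L^2(\mu)$ for the two-term averages, the standard van der Corput estimate gives the quantitative bound $\|L_h\|_{L^2}\lesssim_{a,b}\|f_2\|_\infty^2\,\vertiii{F_{1,h}}_2$. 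Combining this with Hölder's inequality, the inductive definition of the Gowers--Host--Kra seminorms, and the fact that $\frac1H\sum_{h=1}^H\vertiii{f_1\,f_1\circ T^{ah}}_2^4$ is controlled by $\vertiii{f_1}_3^8$ (using the comparability, with $a$-dependent constants, of the Gowers--Host--Kra seminorms of $T$ and of $T^a$), the hypothesis $\vertiii{f_1}_3=0$ forces $\frac1H\sum_{h=1}^H\|L_h\|_{L^1}\to 0$. Integrating the displayed inequality and applying Fatou's lemma to the nonnegative limit $\liminf_H\frac1H\sum_{h=1}^H|L_h|$ makes it vanish $\mu$-a.e.; hence $\limsup_{N}\sup_{t}\left|W_N(f_1,f_2,x,t)\right|=0$ for $x$ in a full-measure subset $X_{f_1,f_2}\subseteq X_0$.

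For part (2), since $\mathcal{Z}_2$ is an inverse limit of $2$-step nilsystems \cite{HostKraNEA}, given $f_1,f_2\in\mathcal{Z}_2$ we $L^2(\mu)$-approximate them by $\tilde f_i=g_i\circ\pi$ with $g_i$ continuous on a $2$-step nilsystem factor $(Y,S)$ with factor map $\pi\colon X\to Y$. A maximal inequality $\bigl\|\sup_N\sup_t|W_N(u,v,\cdot,\cdot)|\bigr\|_{L^1(\mu)}\lesssim\|u\|_{L^2(\mu)}\|v\|_{L^\infty(\mu)}$ and its symmetric form — obtained by combining Bourgain's maximal estimate for double recurrence with the van der Corput step above — reduce the problem to the approximants $\tilde f_1,\tilde f_2$. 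For these, the sequence $n\mapsto g_1(S^{an}\pi x)\,g_2(S^{bn}\pi x)\,e(nt)$ has the form $\Psi(R_t^{\,n}w)$ for a continuous function $\Psi$ on the nilmanifold $Z=Y\times Y\times\TT$, the nilrotation $R_t$ given by the translation $(g_0^a,g_0^b,t)$ (where $g_0$ is the element defining $S$), and the base point $w=(\pi x,\pi x,0)$. By Leibman's pointwise convergence theorem for polynomial orbits on nilmanifolds, $\frac1N\sum_{n=1}^N\Psi(R_t^{\,n}w)$ converges for \emph{every} $w\in Z$ and \emph{every} $t\in\RR$. Hence $W_N(f_1,f_2,x,t)$ converges for all $t\in\RR$, for every $x$ outside the ($t$-independent) null set furnished by the maximal inequality, and this is the required single set of full measure.

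The principal obstacle lies in part (2): establishing the maximal inequality $\bigl\|\sup_N\sup_t|W_N(u,v,\cdot,\cdot)|\bigr\|_{L^1}\lesssim\|u\|_{L^2}\|v\|_{L^\infty}$ — i.e. controlling $\sup_N$ and $\sup_t$ \emph{simultaneously} — is where Bourgain's maximal estimate for double recurrence and a careful van der Corput argument have to be fitted together; everything downstream (the reduction to nilfactor functions, and the pointwise convergence of nilsequence averages via Leibman, which, being valid at every point and for every rotation, automatically yields one good set for all $t$) is then comparatively mechanical. Part (1) is softer: beyond \cite{BourgainDR} its only genuine inputs are that $\mathcal{Z}_1$ is characteristic for the two-term averages and the bookkeeping of the seminorm hierarchy — including the harmless replacement of the shifts $T^h$ by $T^{ah}$ — after which Fatou's lemma closes the argument.
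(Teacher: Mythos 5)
Your proposal is essentially correct and follows the same blueprint that the paper uses for its polynomial generalization, and that the cited source \cite{WWDR} uses for this theorem itself (which is quoted in the present paper rather than reproved): a van der Corput step that eliminates the unimodular phase, Bourgain's a.e.\ double recurrence theorem plus a Gowers--Host--Kra seminorm estimate for the uniform part, and an $L^2$-approximation argument combined with Leibman's everywhere-pointwise convergence on the product nilmanifold $Y\times Y\times\TT$ for the convergence part. The one substantive correction concerns what you call the principal obstacle: the maximal inequality is immediate, since $\sup_N\sup_t|W_N(u,v,x,t)|\le\|v\|_{L^\infty(\mu)}\,\sup_N\frac{1}{N}\sum_{n=1}^N|u|(T^{an}x)$ by the triangle inequality (the supremum over $t$ and the weight disappear under the absolute value), and the classical dominated ergodic theorem --- the paper's inequality $(\ref{maxIneq})$ --- then gives the $L^2$ bound $2\|u\|_{L^2(\mu)}$; no Bourgain maximal estimate and no van der Corput argument are needed there, which is exactly how the paper's Lemma $\ref{approximation}$ proceeds.
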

One of the estimates that was established to prove the uniform convergence result above was the following (this was obtained in the proof of \cite[Theorem 5.1]{WWDR}):
\begin{theorem}\label{Thm-estimate}
	Let $(X, \mathcal{F}, \mu, T)$ be a standard ergodic dynamical system, and $f_1, f_2 \in L^\infty(\mu)$. We have
	\begin{equation}\label{estimate}
	\int \limsup_{N \to \infty} \sup_{t \in \RR} \left| \frac{1}{N} \sum_{n=1}^N f_1(T^{an}x) f_2(T^{bn}x) e^{2\pi i n t} \right|^2 d\mu(x) \lesssim_{a, b} \min \left\{ \vertiii{f_1}_3^2\, , \, \vertiii{f_2}_3^2 \right\}.
	\end{equation}
\end{theorem}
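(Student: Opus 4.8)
The plan is to use van der Corput's inequality to remove the supremum over $t$, reducing matters to two-term (bilinear) ergodic averages of ``derivative'' functions; to bound each such average in $L^2(\mu)$ by the seminorm $\vertiii{\cdot}_2$ of the corresponding derivative function; and then to average over the shift parameter, which raises the index of the seminorm and produces $\vertiii{f_1}_3$ (and, symmetrically, $\vertiii{f_2}_3$). Throughout we may assume $\norm{f_1}_\infty,\norm{f_2}_\infty\le1$.

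First, fix $H\in\NN$ and apply van der Corput's inequality in $n$ to $u_n=f_1(T^{an}x)f_2(T^{bn}x)e^{2\pi i n t}$. Because
\[ u_{n+h}\,\overline{u_n} \;=\; \bigl[f_1\cdot(f_1\circ T^{ah})\bigr](T^{an}x)\cdot\bigl[f_2\cdot(f_2\circ T^{bh})\bigr](T^{bn}x)\cdot e^{2\pi i h t}, \]
and $|e^{2\pi i h t}|=1$, the modulus of $\tfrac1N\sum_n u_{n+h}\overline{u_n}$ does not depend on $t$. With $G_1^{(h)}:=f_1\cdot(f_1\circ T^{ah})$, $G_2^{(h)}:=f_2\cdot(f_2\circ T^{bh})$, and $A_{N,h}(x):=\frac1N\sum_{n=1}^N G_1^{(h)}(T^{an}x)G_2^{(h)}(T^{bn}x)$, van der Corput's inequality gives, for every $x$,
\[ \limsup_{N\to\infty}\sup_{t\in\RR}\bigl|W_N(f_1,f_2,x,t)\bigr|^2 \;\lesssim\; \frac1H + \frac1H\sum_{h=1}^H\limsup_{N\to\infty}\bigl|A_{N,h}(x)\bigr|, \]
the discrepancy between $\sum_{n=1}^{N-h}$ and $\sum_{n=1}^N$ being absorbed into a term vanishing as $N\to\infty$. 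Integrating this pointwise estimate in $x$ and using the reverse Fatou lemma (every integrand is bounded by $1$) yields
\[ \int\limsup_{N}\sup_{t}\bigl|W_N\bigr|^2\,d\mu \;\lesssim\; \frac1H + \frac1H\sum_{h=1}^H\limsup_{N}\norm{A_{N,h}}_{L^1(\mu)}. \]

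Next I would invoke the two-term estimate: for all $g,w\in L^\infty(\mu)$ with $\norm{w}_\infty\le1$,
\[ \limsup_{N\to\infty}\Bigl\|\frac1N\sum_{n=1}^N g(T^{an}x)w(T^{bn}x)\Bigr\|_{L^2(\mu)} \;\lesssim_{a,b}\; \vertiii{g}_2, \]
which comes from another application of van der Corput's inequality together with the fact that the Kronecker factor $\mathcal{Z}_1$ is characteristic for such averages. Applying this with $g=G_1^{(h)}$, $w=G_2^{(h)}$, and using $\norm{\cdot}_{L^1(\mu)}\le\norm{\cdot}_{L^2(\mu)}$, gives $\limsup_N\norm{A_{N,h}}_{L^1(\mu)}\lesssim_{a,b}\vertiii{f_1\cdot(f_1\circ T^{ah})}_2$, so that
\[ \int\limsup_{N}\sup_{t}\bigl|W_N\bigr|^2\,d\mu \;\lesssim_{a,b}\; \frac1H + \frac1H\sum_{h=1}^H\vertiii{f_1\cdot(f_1\circ T^{ah})}_2. \]
Finally I would let $H\to\infty$. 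By Cauchy--Schwarz, $\frac1H\sum_{h=1}^H\vertiii{f_1\cdot(f_1\circ T^{ah})}_2\le\bigl(\frac1H\sum_{h=1}^H\vertiii{f_1\cdot(f_1\circ T^{ah})}_2^4\bigr)^{1/4}$; since the summands are nonnegative and $\vertiii{\cdot}_2$ is $T$-invariant,
\[ \frac1H\sum_{h=1}^H\vertiii{f_1\cdot(f_1\circ T^{ah})}_2^4 \;\le\; |a|\cdot\frac{1}{|a|H}\sum_{j=1}^{|a|H}\vertiii{f_1\cdot(f_1\circ T^{j})}_2^4, \]
and the right-hand side converges to $|a|\,\vertiii{f_1}_3^8$ as $H\to\infty$ by the inductive definition of the Gowers--Host--Kra seminorms. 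Since the left-hand side of \eqref{estimate} does not depend on $H$, it follows that $\int\limsup_N\sup_t|W_N|^2\,d\mu\lesssim_{a,b}\vertiii{f_1}_3^2$; running the same argument with $f_1$ and $f_2$ exchanged yields $\lesssim_{a,b}\vertiii{f_2}_3^2$, and taking the minimum gives the claim.

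The main difficulty lies in the two-term estimate of the third paragraph: establishing that $\mathcal{Z}_1$ is characteristic for $\frac1N\sum_n g(T^{an}x)w(T^{bn}x)$, with the quantitative dependence on $\vertiii{g}_2$ and uniformly in the distinct (nonzero) integers $a,b$, requires a further van der Corput step followed by a Cauchy--Schwarz analysis of the limiting correlations $\lim_N\frac1N\sum_n\langle v_n,v_{n+k}\rangle$ with $v_n=g(T^{an}\cdot)w(T^{bn}\cdot)$, in the course of which one must control the auxiliary transformation $T^{b-a}$ produced by a change of variables. Everything else --- the outer van der Corput reduction, the interchange of $\limsup$ with integration, and the seminorm identity along the progression $\{ah\}$ --- is routine bookkeeping.
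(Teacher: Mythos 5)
Your argument is correct in outline, and it is worth pointing out that the paper does not actually prove Theorem \ref{Thm-estimate} in this text: it imports it from the proof of Theorem 5.1 of \cite{WWDR}. Your route --- a single van der Corput step in $n$, which removes the supremum over $t$ because $u_{n+h}\overline{u_n}$ carries only the constant phase $e^{2\pi i ht}$, followed by integration, a power-mean inequality, and the averaging identity $\lim_{H}\frac{1}{H}\sum_{h=1}^{H}\vertiii{f\cdot f\circ T^{h}}_2^4=\vertiii{f}_3^8$ --- is exactly the template the paper itself uses one degree higher in the inductive step of Lemma \ref{EstimationLemmaPolyUnif}, so the approach matches the source rather than departing from it. Two caveats. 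First, essentially all of the analytic content is concentrated in your ``two-term estimate'' $\limsup_N\bigl\|\frac{1}{N}\sum_{n}g(T^{an}\cdot)\,w(T^{bn}\cdot)\bigr\|_{L^2(\mu)}\lesssim_{a,b}\vertiii{g}_2$, which you state correctly and whose proof mechanism you describe accurately (a further van der Corput step, the mean ergodic theorem applied to $T^{a-b}$, and a comparison between the invariant factor of $T^{a-b}$ and the seminorm $\vertiii{\cdot}_2$ of $T$, which is where the dependence of the constant on $a$ and $b$ enters); since you flag it as the main difficulty and it is a known quantitative form of the fact that $\mathcal{Z}_1$ is characteristic for two-term Furstenberg averages, this reads as an honest citation rather than a gap, but a self-contained proof would have to supply it. Second, for the left-hand side of $(\ref{estimate})$ to be an integral at all one needs measurability of $x\mapsto\limsup_N\sup_{t\in\RR}|W_N(f_1,f_2,x,t)|$; the paper handles the analogous point in Lemma \ref{measurablityLemma} by restricting the supremum to rational parameters, and your write-up should include a sentence to that effect. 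Your passage from the progression $\{ah\}$ to all shifts, using positivity and $T$-invariance of the seminorms and absorbing the resulting factor $|a|^{1/4}$ into the implied constant, is correct.
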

In this paper, we will extend Theorem $\ref{WWDR}$ to the case where we have a polynomial exponent, i.e. we will show that there exists a set of full measure $X_{f_1, f_2}$ such that for any $x \in X_{f_1, f_2}$, the averages 
\[ \frac{1}{N} \sum_{n=1}^N f_1(T^{an}x)f_2(T^{bn}x)e(p(n)) \]
converge for all polynomials $p$ with real coefficients. Furthermore, we will show that an appropriate Host-Kra-Ziegler factor, depending on the degree of the polynomial $p$, is a characteristic factor for these averages. The statement is given precisely as follows:
\begin{theorem}\label{Main Theorem}
	Let $(X, \mathcal{F}, \mu, T)$ be a standard ergodic dynamical system, $a, b \in \ZZ$ such that $a \neq b$, and $f_1, f_2 \in L^\infty(\mu)$. Let 
	\begin{equation}\label{polyDRWWavg} W_N(f_1, f_2,x ,p) = \frac{1}{N} \sum_{n=1}^{N} f_1(T^{an}x)f_2(T^{bn}x) e(p(n)), \end{equation}
	where $p$ is a polynomial with real coefficients. Then the following are true:
	\begin{enumerate}
		\item If either $f_1$ or $f_2$ belongs to $\mathcal{Z}_{k+1}^\perp$, then there exists a set of full measure $X_{f_1, f_2}$ such that for all $x \in X_{f_1, f_2}$,
		\begin{equation*}\label{UniformDWW_allPoly} \limsup_{N \to \infty} \sup_{p \in \RR_k[\xi]} \left|W_N(f_1, f_2, x,p) \right| = 0. \end{equation*}
		\item If $f_1, f_2 \in \mathcal{Z}_{k+1}$, then for $\mu$-a.e. $x \in X$, the averages $W_N(f_1, f_2, x, p)$ converge for all $p \in \RR_k[\xi]$.
		\item There exists a set of full measure $X_{f_1, f_2}$ such that for all $x \in X_{f_1, f_2}$, the averages
		\[ \frac{1}{N} \sum_{n=1}^N f_1(T^{an}x)f_2(T^{bn}x)\phi(p(n))\]
		converge for all continuous functions $\phi: \TT \to \CC$ and polynomials $p$ with real coefficients.
	\end{enumerate}
\end{theorem}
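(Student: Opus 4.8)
The plan is to prove (1) and (2) separately and then deduce (3); in each part the roles of $f_1$ and $f_2$ are symmetric, so I only discuss the case involving $f_1$, and I write $a_1=a$, $a_2=b$.

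For (1) I would argue by induction on $k$, the case $k=1$ being exactly part (1) of Theorem~\ref{WWDR}. For the inductive step, fix $x$ and apply the van der Corput inequality to $u_n=f_1(T^{an}x)f_2(T^{bn}x)e(p(n))$: for every $H\ge1$,
\[
\limsup_{N\to\infty}\sup_{p\in\RR_k[\xi]}|W_N(f_1,f_2,x,p)|^2 \le \frac{\norm{f_1}_\infty^2\norm{f_2}_\infty^2}{H+1}+\frac{2}{H+1}\sum_{h=1}^{H}\limsup_{N\to\infty}\sup_{p\in\RR_k[\xi]}\left|\frac1N\sum_{n}u_{n+h}\overline{u_n}\right|.
\]
Since $f_1,f_2$ are real, $u_{n+h}\overline{u_n}=f_1^{(h)}(T^{an}x)\,f_2^{(h)}(T^{bn}x)\,e\bigl(p(n+h)-p(n)\bigr)$ with $f_i^{(h)}=f_i\cdot(f_i\circ T^{a_ih})$, and $p(n+h)-p(n)$ is a polynomial of degree exactly $k-1$ in $n$ whenever $\deg p=k$ and $h\ge1$, so the inner supremum is $\le\sup_{q\in\RR_{k-1}[\xi]}|W_N(f_1^{(h)},f_2^{(h)},x,q)|$. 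Now $f_1\in\mathcal{Z}_{k+1}^\perp$ means $\vertiii{f_1}_{k+2}=0$, and the inductive definition of the Gowers--Host--Kra seminorms expresses $\vertiii{f_1}_{k+2}^{2^{k+2}}$ as the Ces\`aro limit of the nonnegative quantities $\vertiii{f_1\cdot(f_1\circ T^{m})}_{k+1}^{2^{k+1}}$; hence $E=\{m\ge1:\vertiii{f_1\cdot(f_1\circ T^m)}_{k+1}=0\}$ has density $1$, and so does $\{h\ge1:ah\in E\}$, for which $f_1^{(h)}\in\mathcal{Z}_k^\perp$. For such $h$ the inductive hypothesis (part (1) in degree $k-1$) gives $\limsup_N\sup_{q\in\RR_{k-1}[\xi]}|W_N(f_1^{(h)},f_2^{(h)},x,q)|=0$ for $\mu$-a.e.\ $x$; intersecting these full-measure sets over the (countably many) relevant $h$, letting $N\to\infty$ and then $H\to\infty$, and using that the remaining $h$ form a set of density $0$, we obtain $\limsup_N\sup_{p\in\RR_k[\xi]}|W_N(f_1,f_2,x,p)|=0$ for $\mu$-a.e.\ $x$, which is (1). (The same induction carried out at the level of the $L^2$ maximal inequality, with Theorem~\ref{Thm-estimate} as the base case, yields $\int\limsup_N\sup_{p\in\RR_k[\xi]}|W_N(f_1,f_2,x,p)|^2\,d\mu\lesssim_{a,b,k}\min\{\vertiii{f_1}_{k+2}^2,\vertiii{f_2}_{k+2}^2\}$, which is of independent interest.)

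For (2), the difficulty is to treat the whole family $\{p\in\RR_k[\xi]\}$ at once, and the device is to absorb the polynomial into the base point of an auxiliary nilsystem. Because $e(p(n))$ depends on $p$ only through its coefficients modulo $\ZZ$, there is a $k$-step nilsystem $(Z,S)$ (e.g.\ $\TT^{k+1}$ with a suitable unipotent affine transformation), a continuous $\Phi:Z\to\CC$, and a map $p\mapsto z(p)\in Z$ with $e(p(n))=\Phi(S^nz(p))$ for all $n$ and all $p\in\RR_k[\xi]$. By the Host--Kra--Ziegler structure theorem, $\mathcal{Z}_{k+1}$ is an inverse limit of $(k+1)$-step nilsystems $(Y_j,T_j)$, so for each $m$ I choose continuous functions $\tilde g_1^{(m)},\tilde g_2^{(m)}$ on some $Y_{j(m)}$ with $\norm{\tilde g_i^{(m)}}_\infty\le\norm{f_i}_\infty$ (truncate) and $\norm{f_i-\tilde g_i^{(m)}}_{L^2}<4^{-m}$. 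The weak $(1,1)$ maximal inequality for the Birkhoff averages of $T^a$ and $T^b$, together with the Borel--Cantelli lemma, produces one full-measure set $X_{f_1,f_2}$ on which $\limsup_N\frac1N\sum_{n\le N}|(f_i-\tilde g_i^{(m)})(T^{a_in}x)|\le 2^{-m}$ for all large $m$, and hence on which $\sup_p|W_N(f_1,f_2,x,p)-W_N(\tilde g_1^{(m)},\tilde g_2^{(m)},x,p)|=O(2^{-m})$ uniformly in $N$. For each fixed $m$, $W_N(\tilde g_1^{(m)},\tilde g_2^{(m)},x,p)$ is the Birkhoff average $\frac1N\sum_{n\le N}(\tilde g_1^{(m)}\otimes\tilde g_2^{(m)}\otimes\Phi)(R^n w)$ on the nilsystem $\bigl(Y_{j(m)}\times Y_{j(m)}\times Z,\ T_{j(m)}^a\times T_{j(m)}^b\times S\bigr)$, with $w=(\pi_{j(m)}x,\pi_{j(m)}x,z(p))$, and Birkhoff averages of continuous functions on a nilsystem converge at every point (the orbit closure of any point being a uniquely ergodic sub-nilmanifold), so this converges as $N\to\infty$ for every $x$ and every $p$. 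An $\varepsilon/4$ argument then shows $(W_N(f_1,f_2,x,p))_N$ is Cauchy for every $x\in X_{f_1,f_2}$ and every $p\in\RR_k[\xi]$, which is (2).

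Finally, (3) follows from (1) and (2): writing $f_i=\expec(f_i\mid\mathcal{Z}_{k+1})+f_i'$ with $f_i'\in\mathcal{Z}_{k+1}^\perp$, the expansion of $W_N(f_1,f_2,x,p)$ into four terms has three terms with a factor in $\mathcal{Z}_{k+1}^\perp$ (which $\to0$ by (1)) and one term handled by (2), so on a single full-measure set — the intersection over all $k$ of the sets provided by (1) and (2) — $W_N(f_1,f_2,x,p)$ converges for every $p\in\RR[\xi]$. Given a continuous $\phi:\TT\to\CC$, approximate it uniformly by its Fej\'er means $\phi_J=\sum_{|j|\le J}c_{j,J}\,e(j\,\cdot\,)$ (so $\norm{\phi_J}_\infty\le\norm{\phi}_\infty$); since $\frac1N\sum_{n\le N}f_1(T^{an}x)f_2(T^{bn}x)\phi_J(p(n))=\sum_{|j|\le J}c_{j,J}\,W_N(f_1,f_2,x,jp)$ is a finite combination of convergent sequences, an $\varepsilon/3$ argument gives convergence of $\frac1N\sum_{n\le N}f_1(T^{an}x)f_2(T^{bn}x)\phi(p(n))$ for all continuous $\phi$ and all $p\in\RR[\xi]$. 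The main obstacle is (2): for a single polynomial this is essentially the nilsystem case of double recurrence, but there is no equicontinuity in $p$ — the phases $e(p(n))$ and $e(p'(n))$ drift apart as $n\to\infty$ even for nearby $p,p'$ — so one cannot pass from "for each $p$" to "for all $p$" by a soft argument, and it is precisely the reduction of the polynomial parameter to a base point of a fixed nilsystem, combined with everywhere-convergence of Birkhoff averages on nilsystems, that resolves this. By comparison, the van der Corput induction in (1) is routine, the only care being the management of exceptional sets and the order of the limits in $N$ and $H$.
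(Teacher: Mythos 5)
Your parts (2) and (3) are essentially sound, and in (2) you take a genuinely different (but valid) route from the paper: instead of invoking Leibman's theorem for the \emph{polynomial} orbit $(an,bn,q(n))$ acting on $X^2\times\TT$ as the paper does, you encode $e(p(n))$ as $\Phi(S^nz(p))$ for a fixed unipotent affine system on $\TT^{k+1}$ with $p$ absorbed into the base point, and then use everywhere-convergence of \emph{linear} Birkhoff averages of continuous functions on the product nilsystem. Both reductions rest on the same structure theorem plus an $L^2$-approximation controlled by the maximal inequality (your Borel--Cantelli variant replaces the paper's Fatou argument on $\limsup-\liminf$), and your observation that the real difficulty is uniformity in $p$ is exactly right. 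Part (3) matches the paper's proof.

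The genuine gap is in your main argument for (1). From $\vertiii{f_1}_{k+2}^{2^{k+2}}=\lim_M\frac1M\sum_{m\le M}\vertiii{f_1\cdot(f_1\circ T^m)}_{k+1}^{2^{k+1}}=0$ you conclude that $E=\{m:\vertiii{f_1\cdot(f_1\circ T^m)}_{k+1}=0\}$ has density $1$. This is false: a Ces\`aro average of nonnegative bounded terms can tend to $0$ with every term strictly positive (already for $k=1$ on a weakly mixing system, $\vertiii{f}_2=0$ while $\int f\cdot f\circ T^m\,d\mu\neq 0$ for every $m$). What is true is that $\{m:\vertiii{f_1\cdot(f_1\circ T^m)}_{k+1}^{2^{k+1}}>\varepsilon\}$ has density $0$ for each $\varepsilon>0$, but the functions $f_1^{(h)}$ with small nonzero seminorm are exactly the ones your qualitative inductive hypothesis (``seminorm zero $\Rightarrow$ limsup zero a.e.'') cannot see, so the induction does not close. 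The fix is precisely the estimate you mention only parenthetically as ``of independent interest'': one must run the induction on the quantitative statement
\[
\int\limsup_{N}\sup_{p\in\RR_k[\xi]}|W_N(f_1,f_2,x,p)|^2\,d\mu\ \lesssim_{a,b,k}\ \min\bigl\{\vertiii{f_1}_{k+2}^2,\vertiii{f_2}_{k+2}^2\bigr\},
\]
with Theorem \ref{Thm-estimate} as the base case; after van der Corput, Cauchy--Schwarz and H\"older one raises the exponent to $2^{l+2}$ so that the Ces\`aro average over $h$ of $\vertiii{f_1\cdot(f_1\circ T^{ah})}_{l+2}^{2^{l+2}}$ converges to the $(l+3)$-seminorm as $H\to\infty$. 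This is the paper's Lemma \ref{EstimationLemmaPolyUnif}, and it is not an optional strengthening but the mechanism that makes the induction work; note also that one must check measurability of $x\mapsto\sup_{p\in\RR_k[\xi]}|W_N(f_1,f_2,x,p)|$ before integrating (the paper's Lemma \ref{measurablityLemma}, via restriction to rational coefficient vectors), a point your write-up passes over.
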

We remark that when $a = b$, the averages in $(\ref{polyDRWWavg})$ become
\[ W_N(f_1, f_2, x, p) = \frac{1}{N} \sum_{n=1}^N (f_1 \cdot f_2)(T^{an}x)e^{2\pi i p(n)}. \]
which reduces to the polynomial extension of the pointwise ergodic theorem, and some results on a convergence of this type of averages have been achieved by E. Lesigne \cite{Lesigne90, Lesigne93} and N. Frantzikinakis \cite{Fran06}. Lesigne showed that for any ergodic system $(X, \mathcal{F}, \mu, T)$, there exists a set of full measure for which the averages
\begin{equation}\label{Lesigne} \frac{1}{N} \sum_{n=1}^N \phi(p(n)) f(T^nx) \end{equation}
converge for all polynomials $p$ and a continuous function $\phi:\TT \to \CC$. Furthermore, if $T$ is assumed to be totally ergodic, $p$ is a $k$-th degree polynomial, and $f$ belongs to the orthogonal complement of the $k$-th degree Abramov factor, then the averages in  $(\ref{Lesigne})$ converge to $0$. Frantzikinakis extended this result by showing the uniform counterpart: Assuming that $T$ is totally ergodic and $f$ belongs to the orthogonal complement of the $k$-th degree Abramov factor, then
\begin{equation*}
\lim_{N \to \infty} \sup_{p \in \RR_k[\xi]} \left|\frac{1}{N} \sum_{n=1}^N \phi(p(n)) f(T^nx)\right| = 0.
\end{equation*}
Frantzikinakis also showed that the assumption $T$ being totally ergodic cannot be replaced with $T$ merely ergodic by providing a counterexample.

The classical Wiener-Wintner averages are also generalized to the cases where $e(nt)$ is replaced by a nilsequence (cf. \cite{BHK_nilseq, HostKraUniformity}). For instance, B. Host and B. Kra showed \cite[Theorem 2.22]{HostKraUniformity} that given an ergodic system $(X, \mathcal{F}, \mu, T)$ and a function $f \in L^\infty(\mu)$, there exists a set of full measure $X_f \subset X$ such that for any $x \in X_f$, and for any nilsequence $(b_n)$, the averages
\[ \frac{1}{N}\sum_{n=1}^N f(T^nx) b_n \]
converge. Furthermore, the uniform version of this result was obtained by T. Eisner and P. Zorin-Kranich \cite[Theorem 1.2]{EZK2013}. We note that some aspects of polynomial Wiener-Wintner averages are covered in their studies because if $p$ is any degree-$k$ polynomial, then $b_n = e(p(n))$ is a $k$-step nilsequence.

Recently, T. Eisner and B. Krause obtained a uniform Wiener-Wintner results for averages with weights involving Hardy functions and for "twisted" polynomial ergodic averages \cite{EisnerKrause}.

\subsection{Remarks} This paper combines the authors' previous preprints that appeared during the summer of 2014 \cite{WWDR_nil, WWDR_poly2}. In this paper, we will focus on the stronger result obtained in \cite{WWDR_poly2} (which is Theorem $\ref{Main Theorem}$ of the current paper), and use that to prove the result obtained in \cite{WWDR_nil} (which is Corollary $\ref{WWDR_nil}$ of the current paper). It is worth mentioning that Corollary $\ref{WWDR_nil}$ was obtained before Theorem $\ref{Main Theorem}$ using different machineries.

{Since the first submission of this paper in September 2014, some new results have been announced. For instance, the authors showed that the sequence $a_n = f_1(T^{an}x)f_2(T^{bn}x)$ is a good universal weight for the Furstenberg averages in the $L^2$-norm for $\mu$-a.e. $x \in X$ \cite{NewUnivWeight_Norm}, which was extended further to the case where we have commuting transformations \cite{CommNorm}. The first author extended the double recurrence Wiener-Wintner result to nilsequences \cite{AssaniDRNil}, and a result similar to this was announced by P. Zorin-Kranich independently \cite{ZK_NilseqWW}.}
\section{Proof of Theorem $\ref{Main Theorem}$}
To prove Theorem $\ref{Main Theorem}$, we break the proof in two cases: The case where either $f_1$ or $f_2$ belongs to $\mathcal{Z}_{k+1}^\perp$, and the case where both $f_1$ and $f_2$ belong to $\mathcal{Z}_{k+1}$, where $k$ is the degree of the polynomial $p$. 

The first case corresponds exactly to (1) of Theorem $\ref{Main Theorem}$, and we prove this by applying induction on $k$. We notice that the base case $k = 1$ is essentially (1) of Theorem $\ref{WWDR}$, where $p(n) = tn$. For the inductive step, we first apply van der Corput's lemma to reduce the degree of the polynomial, which allows us to use the inductive hypothesis. Then we use the estimate similar to Theorem $\ref{Thm-estimate}$ for the polynomials of higher degree to control the integral of the limit supremum of these averages. Consequently, the uniform Wiener-Wintner result follows.

For the second case, we restrict ourselves to the case where both functions are measurable with respect to $\mathcal{Z}_{k+1}$. Using the fact that the $k+1$-th Host-Kra-Ziegler factor is an inverse limit of a sequence of $k+1$-step nilsystems that are factors of $(X, \mathcal{F}, \mu, T)$ (cf. \cite[Theorem 10.1]{HostKraNEA}), we further restrict ourselves to the case where $(X, \mathcal{F}, \mu, T)$ is an ergodic nilsystem. Furthermore, since the set of continuous functions is dense in $L^2(\mu)$, we will assume that $f_1$ and $f_2$ are continuous. These assumptions allow us to use Leibman's pointwise convergence result \cite{Leibman} to show that the averages
converge for all $x \in X$.

Combining these two cases, we prove (3) of Theorem $\ref{Main Theorem}$.

\subsection{Proof of (1) of Theorem $\ref{Main Theorem}$}
In this section, we prove (1) of Theorem $\ref{Main Theorem}$. One of the main inequalities used in this part of the proof is van der Corput's lemma, which is stated as follows (a proof can found in \cite{KuipersNiederreiter}):
\begin{lemma}[van der Corput] \label{lem-vdc}
	If $(a_n)$ is a sequence of complex numbers and if $H$ is an integer between $0$ and $N-1$, then
	\begin{align}\label{vdc}
	\left| \frac{1}{N} \sum_{n=0}^{N-1} a_n \right|^2 
	&\leq \frac{N+H}{N^2(H+1)} \sum_{n=0}^{N-1} |a_n|^2 \\
	&+ \frac{2(N+H)}{N^2(H+1)^2} \sum_{h=1}^H (H + 1 - h) Re \left( \sum_{n=0}^{N-h-1} a_n \overline{a}_{n+h} \right). \nonumber
	\end{align}
\end{lemma}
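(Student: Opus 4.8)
The plan is to establish (\ref{vdc}) by the standard ``average over short windows, then Cauchy--Schwarz'' argument. First I would extend the sequence by declaring $a_n = 0$ for $n < 0$ and for $n \geq N$, so that every sum over $n$ may be taken over all of $\ZZ$ without changing its value. The key bookkeeping observation is that each of the $N$ terms $a_n$ with $0 \leq n \leq N-1$ is counted exactly $H+1$ times among the length-$(H+1)$ blocks $\sum_{j=0}^{H} a_{n+j}$, so that
\[ (H+1) \sum_{n=0}^{N-1} a_n = \sum_{n=-H}^{N-1} \left( \sum_{j=0}^{H} a_{n+j} \right). \]
The outer sum on the right has exactly $N+H$ terms, so the Cauchy--Schwarz inequality gives
\[ (H+1)^2 \left| \sum_{n=0}^{N-1} a_n \right|^2 \leq (N+H) \sum_{n=-H}^{N-1} \left| \sum_{j=0}^{H} a_{n+j} \right|^2. \]

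Next I would expand the inner square as $\sum_{j=0}^{H} \sum_{k=0}^{H} a_{n+j} \overline{a_{n+k}}$, interchange the order of summation, and reorganize the double sum over $(j,k)$ according to the difference $h = k - j$. Because $a_n$ vanishes outside $[0, N-1]$, the sum over $n \in \{-H, \dots, N-1\}$ can be replaced by a sum over all of $\ZZ$; and for each fixed $h$ with $|h| \leq H$ there are precisely $H+1-|h|$ pairs $(j,k) \in \{0,\dots,H\}^2$ with $k - j = h$. This yields
\[ \sum_{n \in \ZZ} \left| \sum_{j=0}^{H} a_{n+j} \right|^2 = \sum_{h=-H}^{H} (H+1-|h|) \sum_{n \in \ZZ} a_n \overline{a_{n+h}}. \]
The term $h = 0$ contributes $(H+1) \sum_{n=0}^{N-1} |a_n|^2$. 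For $h \geq 1$, pairing the contributions of $h$ and $-h$ and applying the change of variable $n \mapsto n + h$ identifies their sum with $2(H+1-h)\, Re\!\left( \sum_{n} a_n \overline{a_{n+h}} \right)$; and since $a$ is supported in $[0, N-1]$, the inner sum truncates to $n \in \{0, \dots, N-h-1\}$.

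Combining these identities gives
\[ \sum_{n=-H}^{N-1} \left| \sum_{j=0}^{H} a_{n+j} \right|^2 = (H+1)\sum_{n=0}^{N-1}|a_n|^2 + 2\sum_{h=1}^{H}(H+1-h)\,Re\!\left( \sum_{n=0}^{N-h-1} a_n \overline{a_{n+h}} \right), \]
and substituting this into the Cauchy--Schwarz bound and dividing through by $N^2(H+1)^2$ produces exactly (\ref{vdc}). There is no genuine obstacle here: the argument is entirely elementary, and the only points demanding a little care are the multiplicity count $H+1-|h|$ and the correct truncation of the inner sums once the vanishing of $a_n$ outside $[0,N-1]$ is taken into account. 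Since this is a classical inequality, in practice I would simply invoke \cite{KuipersNiederreiter} rather than reproduce the computation in full.
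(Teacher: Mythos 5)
Your argument is correct and complete: the multiplicity count $H+1-|h|$, the Cauchy--Schwarz step with $N+H$ terms, and the truncation of $\sum_n a_n\overline{a_{n+h}}$ to $0\le n\le N-h-1$ all check out, and dividing by $N^2(H+1)^2$ yields exactly $(\ref{vdc})$. The paper does not prove this lemma at all --- it simply cites \cite{KuipersNiederreiter} --- and your ``average over length-$(H+1)$ windows, then Cauchy--Schwarz'' argument is precisely the classical proof found there, so there is nothing to reconcile.
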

{The next lemma addresses the measurability of the map that takes a point in the phase space to the supremum of the polynomial Wiener-Wintner averages over a collection of polynomials with the same degree.
\begin{lemma}\label{measurablityLemma} For each positive integers $N$ and $k$, the map 
\[ x \in X \mapsto F_{N, k}(x) =  \sup_{p \in \RR_k[\xi]}\left| \frac{1}{N} \sum_{n=0}^{N-1} f_1(T^{an}x)f_2(T^{bn}x)e(p(n))\right|\]
is measurable.
\end{lemma}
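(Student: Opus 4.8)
The only subtlety here is that the supremum defining $F_{N,k}$ ranges over the uncountable family $\RR_k[\xi]$, so it is not \emph{a priori} a countable supremum of measurable functions. The plan is to exploit the fact that, for a fixed $x$ and fixed $N$, the quantity inside the absolute value depends continuously on the coefficients of $p$, and to replace the uncountable supremum by a supremum over a fixed countable dense set of coefficient vectors, valid pointwise in $x$.

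Concretely, I would identify a polynomial $p(\xi) = c_0 + c_1\xi + \dots + c_k \xi^k$ with its coefficient vector $\mathbf{c} = (c_0,\dots,c_k)$ lying in a separable parameter space (either $\RR^{k+1}$, or $\RR^k \times (\RR \setminus \{0\})$ if $\RR_k[\xi]$ is taken to consist of polynomials of degree exactly $k$; the argument is the same). For each fixed $x \in X$, the map
\[ \mathbf{c} \longmapsto \frac{1}{N} \sum_{n=0}^{N-1} f_1(T^{an}x)\, f_2(T^{bn}x)\, e\!\left( c_0 + c_1 n + \dots + c_k n^k \right) \]
is a finite sum of continuous functions of $\mathbf{c}$, hence continuous, and so is its modulus. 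Therefore, fixing once and for all a countable dense subset $D$ of the parameter space, one gets, by continuity,
\[ F_{N,k}(x) = \sup_{\mathbf{c} \in D} \left| \frac{1}{N} \sum_{n=0}^{N-1} f_1(T^{an}x)\, f_2(T^{bn}x)\, e\!\left( c_0 + c_1 n + \dots + c_k n^k \right) \right| \]
for every $x \in X$, with one and the same countable set $D$ working for all $x$.

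It then remains to observe that for each fixed $\mathbf{c}$ the function under the supremum is measurable in $x$, being obtained from the measurable functions $f_1, f_2$, the measurable maps $T^{an}, T^{bn}$, finitely many products and a finite sum, followed by the modulus; a countable supremum of measurable functions is measurable, so $F_{N,k}$ is measurable. I do not anticipate a genuine obstacle: the only step that requires care is the displayed reduction to the countable set $D$, which is exactly where one uses continuity in the coefficients (for fixed $x$) together with separability of the coefficient space. (Alternatively, one could phrase the same point via joint measurability of Carathéodory-type functions, but the dense-set argument is the most direct.)
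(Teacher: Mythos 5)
Your argument is correct and is essentially identical to the paper's proof: the paper likewise identifies $p$ with its coefficient vector, uses continuity of $\mathbf{c} \mapsto e\left(\sum_{j=0}^k c_j n^j\right)$ for each fixed $n$ to replace the supremum over $\RR^{k+1}$ by one over the countable dense set $\QQ^{k+1}$, and then concludes by measurability of a countable supremum of measurable functions.
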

\begin{proof}
If we denote $p(n) = \sum_{j=0}^k c_jn^j$, then
\begin{align*}
\sup_{p \in \RR_k[\xi]} \left| \frac{1}{N} \sum_{n=0}^{N-1} f_1(T^{an}x)f_2(T^{bn}x)e(p(n))\right|
&= \sup_{(c_0, c_1, c_2, \ldots, c_k) \in \RR^{k+1}}\left|\frac{1}{N} \sum_{n=0}^{N-1} f_1(T^{an}x)f_2(T^{bn}x)e\left(\sum_{j=0}^k c_jn^j\right) \right| \\
&= \sup_{(c_0, c_1, c_2, \ldots, c_k) \in \QQ^{k+1}}\left|\frac{1}{N} \sum_{n=0}^{N-1} f_1(T^{an}x)f_2(T^{bn}x)e\left(\sum_{j=0}^k c_jn^j\right) \right|,
\end{align*}
where the last equality follows from the fact that $\QQ^{k+1}$ is dense in $\RR^{k+1}$, and the map
\[ (c_0, c_1, c_2, \ldots, c_k) \mapsto e \left(\sum_{j=0}^k c_jn^j\right) \]
is a continuous one from $\RR^{k+1}$ to $\TT$ for each $n \in \ZZ$. Since $\QQ^{k+1}$ is countable, it follows that the map $x \mapsto F_{N, k}(x)$ is measurable for each $k$ and $N$. 
\end{proof}}
\subsubsection{Proof for the case $k = 2$} To better illustrate the proof of Theorem $\ref{Main Theorem}$(1), we first prove this result for the case where $p(n) = \alpha n^2 + \beta n$. Suppose that either $f_1$ or $f_2$ belongs to the orthogonal complement of $\mathcal{Z}_3$. Then we apply van der Corput's lemma and the Cauchy-Schwarz inequality to obtain
\begin{align*}
&\limsup_{N \to \infty}\sup_{\alpha, \beta \in \RR} \left| \frac{1}{N} \sum_{n=1}^N f_1(T^{an}x)f_2(T^{bn}x)e(\alpha n^2 + \beta n) \right|^2 \\
&\leq \frac{2}{H} + \frac{4}{(H+1)^2} \sum_{h=1}^H (H + 1 - h) \\
& \cdot \limsup_{N \to \infty} \sup_{\alpha, \beta \in \RR} Re \left( \frac{1}{N} \sum_{n=1}^N (f_1 \cdot f_1 \circ T^{ah})(T^{an}x) (f_2 \cdot f_2 \circ T^{bh})(T^{bn}x) e(-(\alpha h^2 + 2\alpha hn + \beta h)) \right) \\
&\leq \frac{2}{H} + 4\left(\frac{1}{H+1} \sum_{h=1}^H \limsup_{N \to \infty}\sup_{\alpha \in \RR} \left| \frac{1}{N} \sum_{n=1}^N (f_1 \cdot f_1 \circ T^{ah})(T^{an}x) (f_2 \cdot f_2 \circ T^{bh})(T^{bn}x) e(-2\alpha hn) \right|^2 \right)^{1/2}.
\end{align*}
We integrate both sides of the inequality above {(which can be done by Lemma $\ref{measurablityLemma}$)}, and by H\"{o}lder's inequality, we obtain
\begin{align}\label{integral}
&\int \limsup_{N \to \infty}\sup_{\alpha, \beta \in \RR} \left| \frac{1}{N} \sum_{n=1}^N f_1(T^{an}x)f_2(T^{bn}x)e(\alpha n^2 + \beta n) \right|^2 d\mu(x)\\
&\leq \frac{2}{H} + 4\left( \frac{1}{H+1} \sum_{h=1}^H \int \limsup_{N \to \infty} \sup_{\alpha \in \RR} \left| \frac{1}{N} \sum_{n=1}^N (f_1 \cdot f_1 \circ T^{ah})(T^{an}x) (f_2 \cdot f_2 \circ T^{bh})(T^{bn}x) e(-2\alpha hn) \right|^2 d\mu(x) \right)^{1/2}. \nonumber
\end{align}
Note that the inside of the integral on the right hand side of $(\ref{integral})$ is a double recurrence Wiener-Wintner average (by setting $t = -2\alpha h$) for each $h$. By ($\ref{estimate}$), we have
\begin{align*}
&\int \limsup_{N \to \infty} \sup_{\alpha \in \RR} \left| \frac{1}{N} \sum_{n=1}^N (f_1 \cdot f_1 \circ T^{ah})(T^{an}x) (f_2 \cdot f_2 \circ T^{bh})(T^{bn}x) e(-2\alpha hn) \right|^2 d\mu(x) \\
&\lesssim_{a, b} \min \left\{ \vertiii{f_1 \cdot f_1 \circ T^{ah}}_3^2 \, ,\vertiii{f_2 \cdot f_2 \circ T^{bh}}_3^2 \right\}. \end{align*}
Therefore,
\begin{align*}
&\int \limsup_{N \to \infty}\sup_{\alpha, \beta \in \RR} \left| \frac{1}{N} \sum_{n=1}^N f_1(T^{an}x)f_2(T^{bn}x)e(\alpha n^2 + \beta n) \right|^2 d\mu(x)\\
&\lesssim_{a, b} \frac{1}{H} + \min \left\{\left( \frac{1}{H} \sum_{h=1}^H \vertiii{f_1 \cdot f_1 \circ T^{ah}}_3^2 \right)^{1/2} \, , \left( \frac{1}{H} \sum_{h=1}^H \vertiii{f_2 \cdot f_2 \circ T^{bh}}_3^2 \right)^{1/2}  \right\} \\
&\lesssim_{a, b} \frac{1}{H} + \min \left\{ \left(\frac{1}{H} \sum_{h=1}^H \vertiii{f_1 \cdot f_1 \circ T^{ah}}_3^4\right)^{1/4} \, , \left(\frac{1}{H} \sum_{h=1}^H \vertiii{f_2 \cdot f_2 \circ T^{bh}}_3^4\right)^{1/4}  \right\},
\end{align*}
and by letting $H \to \infty$, we obtain
\begin{equation}\label{deg2Estimate} \int \limsup_{N \to \infty}\sup_{\alpha, \beta \in \RR} \left| \frac{1}{N} \sum_{n=1}^N f_1(T^{an}x)f_2(T^{bn}x)e(\alpha n^2 + \beta n) \right|^2 d\mu(x) \lesssim_{a, b} \min \left\{ \vertiii{f_1}_4^2 \, , \vertiii{f_2}_4^2 \right\}. \end{equation}
Since either $f_1$ or $f_2$ belongs to $\mathcal{Z}_3^\perp$, either $\vertiii{f_1}_4$ or $\vertiii{f_2}_4$ equals $0$. This completes the proof for the case where $p(n) = \alpha n^2 + \beta n$.

\subsubsection{The proof for any positive integer $k$} One of the key inequalities (besides van der Corput's lemma) used for the case where $k = 2$ is $(\ref{deg2Estimate})$, where we controlled the integral of the $\limsup$ of the averages by an appropriate Gowers-Host-Kra seminorm. We generalize this inequality for polynomials with higher degree with induction (on $k$) and van der Corput's inequality.
\begin{lemma}\label{EstimationLemmaPolyUnif}
	Let $(X, \mathcal{F}, \mu, T)$ be an ergodic system, and $f_1, f_2 \in L^\infty$. Then
	\begin{equation}\label{EstimationPolyUnif} \int \limsup_{N \to \infty} \sup_{p \in \RR_k[\xi]} \left| \frac{1}{N} \sum_{n=1}^N f_1(T^{an}x)f_2(T^{bn}x)e(p(n)) \right|^2 d\mu(x) \lesssim_{a, b, k} \min \left\{ \vertiii{f_1}_{k+2}^2 \, , \vertiii{f_2}_{k+2}^2 \right\}. \end{equation}
\end{lemma}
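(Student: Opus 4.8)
The plan is to prove Lemma \ref{EstimationLemmaPolyUnif} by induction on $k$, following the template already laid out for $k = 2$, with the base case $k = 1$ supplied by Theorem \ref{Thm-estimate} (inequality (\ref{estimate})), since a degree-$1$ polynomial is exactly $p(n) = \alpha n + \beta$ and the constant $\beta$ contributes only a unimodular factor that can be pulled out of the average. So assume (\ref{EstimationPolyUnif}) holds for all degrees up to $k-1$; we establish it for degree $k$.

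For the inductive step, write a general $p \in \RR_k[\xi]$ as $p(n) = \sum_{j=0}^k c_j n^j$ and apply van der Corput's lemma (Lemma \ref{lem-vdc}) to $a_n = f_1(T^{an}x)f_2(T^{bn}x)e(p(n))$. The diagonal term is bounded by $\frac{2}{H}$ (after using $\norm{f_i}_\infty \leq 1$, which we may assume by normalizing), and each off-diagonal term at shift $h$ produces an average of $(f_1 \cdot f_1\circ T^{ah})(T^{an}x)\,(f_2\cdot f_2\circ T^{bh})(T^{bn}x)\, e\big(p(n+h)-p(n)\big)$; the crucial point is that $q_h(n) := p(n+h)-p(n)$ is a polynomial in $n$ of degree $k-1$ (its leading term is $k c_k h\, n^{k-1}$), and as $p$ ranges over $\RR_k[\xi]$ the polynomial $q_h$ ranges over a subset of $\RR_{k-1}[\xi]$. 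Hence, after taking $\limsup_N$, $\sup_{p}$, applying the Cauchy-Schwarz inequality to pull the sup inside as in the $k=2$ case, and integrating (legitimate by Lemma \ref{measurablityLemma}, which handles measurability of the relevant suprema), each off-diagonal integral is bounded by the left-hand side of (\ref{EstimationPolyUnif}) at degree $k-1$ with $f_1$ replaced by $f_1\cdot f_1\circ T^{ah}$ and $f_2$ by $f_2\cdot f_2\circ T^{bh}$. The inductive hypothesis then bounds this by $\lesssim_{a,b,k}\min\{\vertiii{f_1\cdot f_1\circ T^{ah}}_{k+1}^2,\ \vertiii{f_2\cdot f_2\circ T^{bh}}_{k+1}^2\}$.

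Assembling the pieces, one gets
\[
\int \limsup_{N\to\infty}\sup_{p\in\RR_k[\xi]}\left|\frac1N\sum_{n=1}^N f_1(T^{an}x)f_2(T^{bn}x)e(p(n))\right|^2 d\mu(x)\ \lesssim_{a,b,k}\ \frac1H + \min\left\{\Big(\frac1H\sum_{h=1}^H\vertiii{f_1\cdot f_1\circ T^{ah}}_{k+1}^2\Big)^{1/2},\ \Big(\frac1H\sum_{h=1}^H\vertiii{f_2\cdot f_2\circ T^{bh}}_{k+1}^2\Big)^{1/2}\right\},
\]
where I have used H\"older's inequality to move the $\frac1{H+1}\sum_h$ inside the square root after the Cauchy-Schwarz step (exactly as in (\ref{integral})). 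Applying H\"older once more to replace the exponent-$2$ average of seminorms by an exponent-$2^{k}$ average (or simply iterating the passage from exponent $2$ to exponent $4$ done in the $k=2$ case, at the cost of a $k$-dependent constant), and then letting $H\to\infty$, the averaged quantity $\frac1H\sum_{h=1}^H \vertiii{f_i\cdot f_i\circ T^{a_ih}}_{k+1}^{2^{k}}$ converges to $\vertiii{f_i}_{k+2}^{2^{k+1}}$ by the defining recursion of the Gowers-Host-Kra seminorms (the identity $\vertiii{f}_{d+1}^{2^{d+1}} = \lim_{H\to\infty}\frac1H\sum_{h=1}^H \vertiii{f\cdot f\circ T^h}_d^{2^d}$, valid in an ergodic system; the factors $a$ and $b$ only reparametrize the shift and affect the constant). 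This yields (\ref{EstimationPolyUnif}) and closes the induction.

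The main obstacle is bookkeeping rather than conceptual: one must verify carefully that the degree of $q_h = p(\cdot+h)-p(\cdot)$ drops to exactly $k-1$ for $h\neq 0$ so that the inductive hypothesis at the previous degree genuinely applies, and that all the suprema over $p$ (and over the reduced family of polynomials $q_h$) remain measurable so the interchanges of $\sup$, $\limsup_N$, and $\int$ are justified — this is where Lemma \ref{measurablityLemma} is invoked. A secondary technical point is tracking the passage from an $L^1$-average of squared seminorms to the higher-power average needed to invoke the seminorm recursion cleanly; this is routine via H\"older but must be arranged so that the limit in $H$ produces precisely $\vertiii{f_i}_{k+2}$ and not some larger power. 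Once (\ref{EstimationPolyUnif}) is in hand, part (1) of Theorem \ref{Main Theorem} follows as in the $k=2$ case: the hypothesis $f_1\in\mathcal Z_{k+1}^\perp$ or $f_2\in\mathcal Z_{k+1}^\perp$ forces the corresponding seminorm $\vertiii{f_i}_{k+2}$ to vanish, so the integral of the $\limsup$ is zero, whence the $\limsup$ itself vanishes on a set of full measure.
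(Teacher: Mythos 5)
Your proposal is correct and follows essentially the same route as the paper: induction on the degree, van der Corput's lemma to pass from $p$ to $q_h(n)=p(n+h)-p(n)$ of lower degree, the inductive hypothesis applied to $f_i\cdot f_i\circ T^{a_ih}$, and H\"older plus the Gowers--Host--Kra recursion as $H\to\infty$. The only slip is an off-by-one in the last H\"older step: to invoke the recursion $\vertiii{f}_{k+2}^{2^{k+2}}=\lim_{H\to\infty}\frac1H\sum_{h=1}^{H}\vertiii{f\cdot f\circ T^{h}}_{k+1}^{2^{k+1}}$ you need to raise the seminorms to the power $2^{k+1}$ (the paper's $2^{l+2}$ with $k=l+1$), not $2^{k}$, and the resulting limit is $\vertiii{f_i}_{k+2}^{2^{k+2}}$, of which one then takes the $2^{-(k+2)}$ root to land on $\vertiii{f_i}_{k+2}^{2}$.
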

\begin{proof}[Proof of Lemma $\ref{EstimationLemmaPolyUnif}$]
	We proceed by induction on $k$. The base case $k = 1$ is clear from Theorem $\ref{Thm-estimate}$. Now suppose the claim holds for $k = 1, 2, \ldots, l$. Let $p(n)$ be a polynomial with degree $l+1$. If $q_h(n) = p(n+h)-p(n)$, then $q_h(n)$ is a polynomial of degree less than or equal to $l$ for all $h$, viewing $n$ as the variable. By van der Corput's lemma and the Cauchy-Schwarz inequality, we know that
	\begin{align*}
	& \limsup_{N \to \infty}\sup_{p \in \RR_{l+1}[t]} \left| \frac{1}{N} \sum_{n=1}^N f_1(T^{an}x)f_2(T^{bn}x) e(p(n)) \right|^2 \\
	&\leq \frac{2}{H+1} + \frac{4}{H+1} \sum_{h=1}^H  \limsup_{N \to \infty}\sup_{q_h \in \RR_l[t]} \left| \frac{1}{N} \sum_{n=1}^{N- h - 1} (f_1\cdot f_1 \circ T^{ah})(T^{an}x)(f_2\cdot f_2 \circ T^{bh})(T^{bn}x) e(q_h(n)) \right| \\
	&\leq \frac{2}{H+1} + 4\left(\frac{1}{H+1} \sum_{h=1}^H  \limsup_{N \to \infty}\sup_{q_h \in \RR_l[t]} \left| \frac{1}{N} \sum_{n=1}^{N- h - 1} (f_1\cdot f_1 \circ T^{ah})(T^{an}x)(f_2\cdot f_2 \circ T^{bh})(T^{bn}x) e(q_h(n)) \right|^2 \right)^{1/2} .
	\end{align*}
	By integrating both sides {(which is possible by Lemma $\ref{measurablityLemma}$)} and applying H\"{o}lder's inequality, we have
	\begin{align*}
	& \int \limsup_{N \to \infty}\sup_{p \in \RR_{l+1}[t]} \left| \frac{1}{N} \sum_{n=1}^N f_1(T^{an}x)f_2(T^{bn}x) e(p(n)) \right|^2 d\mu(x) \\
	&\leq \frac{2}{H+1} + 4\left(\frac{1}{H+1} \sum_{h=1}^H  \int \limsup_{N \to \infty} \sup_{q_h \in \RR_l[t]} \left| \frac{1}{N} \sum_{n=1}^{N- h - 1} (f_1\cdot f_1 \circ T^{ah})(T^{an}x)(f_2\cdot f_2 \circ T^{bh})(T^{bn}x) e(q_h(n)) \right|^2 d\mu \right)^{1/2}.
	\end{align*}
	For any $1 \leq h \leq H$, the inductive hypothesis tells us that
	\begin{align*}
	&\int \limsup_{N \to \infty} \sup_{q_h \in \RR_{l}[t]} \left| \frac{1}{N} \sum_{n=1}^{N- h - 1} (f_1\cdot f_1 \circ T^{ah})(T^{an}x)(f_2\cdot f_2 \circ T^{bh})(T^{bn}x) e(q_h(n)) \right|^2 d\mu \\
	&\lesssim_{a, b, l} \min \left\{ \vertiii{f_1 \cdot f_1 \circ T^{ah}}_{l+2}^{2}, \vertiii{f_2 \cdot f_2\circ T^{bh}}_{l+2}^{2} \right\}. 
	\end{align*}
	Therefore,
	\begin{align*}
	&\int \limsup_{N \to \infty}\sup_{p \in \RR_{l+1}[t]} \left| \frac{1}{N} \sum_{n=1}^N f_1(T^{an}x)f_2(T^{bn}x) e(p(n)) \right|^2 d\mu(x) \\
	&\lesssim_{a, b, l} \frac{1}{H} + \min \left\{ \left( \frac{1}{H} \sum_{h=1}^H \vertiii{f_1 \cdot f_1 \circ T^{ah}}_{l+2}^{2} \right)^{1/2}, \left( \frac{1}{H} \sum_{h=1}^H \vertiii{f_2 \cdot f_2 \circ T^{bh}}_{l+2}^{2} \right)^{1/2} \right\} \\
	&\lesssim_{a, b, l} \frac{1}{H} + \min \left\{ \left( \frac{1}{H} \sum_{h=1}^H \vertiii{f_1 \cdot f_1 \circ T^{ah}}_{l+2}^{2^{l+2}} \right)^{2^{-(l+2)}}, \left( \frac{1}{H} \sum_{h=1}^H \vertiii{f_2 \cdot f_2 \circ T^{bh}}_{l+2}^{2^{l+2}} \right)^{2^{-(l+2)}} \right\},
	\end{align*}
	and if we let $H \to \infty$, we obtain
	\begin{equation*} \int \limsup_{N \to \infty}\sup_{p \in \RR_{l+1}[t]} \left| \frac{1}{N} \sum_{n=1}^N f_1(T^{an}x)f_2(T^{bn}x) e(p(n)) \right|^2 d\mu(x) \lesssim_{a, b, l} \min \left\{ \vertiii{f_1}_{l+3}^{2}, \vertiii{f_2}_{l+3}^{2}\right\}.\end{equation*}
\end{proof}

\begin{proof}[Proof of (1) of Theorem $\ref{Main Theorem}$]
By our assumption, either $f_1$ or $f_2$ belongs to $\mathcal{Z}_{k+1}^\perp$, which implies that either $\vertiii{f_1}_{k+2}$ or $\vertiii{f_2}_{k+2}$ equals $0$, hence the right hand side of the inequality $(\ref{EstimationPolyUnif})$ equals $0$. Thus, there exists a set of full measure $X_{f_1, f_2}$ such that for any $x \in X_{f_1, f_2}$ and $p \in \RR_k[\xi]$, we have
\[ \limsup_{N \to \infty} \sup_{p \in \RR_k[\xi]} \left| \frac{1}{N} \sum_{n=1}^N f_1(T^{an}x)f_2(T^{bn}x) e(p(n)) \right| = 0. \]
\end{proof}

\subsection{Proofs of (2) and (3) of Theorem $\ref{Main Theorem}$}
In this section, we first prove (2) of Theorem $\ref{Main Theorem}$. Then we use this, together with (1), to prove (3) of the same theorem.

First, we prove the following approximation lemma; this allows us to reduce our proof to the case where $f_1$ and $f_2$ are both continuous functions on an ergodic nilsystem. The following inequality will be useful when dominating the averages in norm: Given a measure-preserving system $(X, \mathcal{F}, \mu, T)$ and $F \in L^\alpha(\mu)$ for $\alpha \in (1, \infty)$, we have
\begin{equation}
\label{maxIneq} \norm{\sup_N \frac{1}{N} \sum_{n=1}^N F(T^nx)}_\alpha \leq \frac{\alpha}{\alpha-1} \norm{F}_\alpha.
\end{equation}
This inequality can be obtained by using the maximal ergodic theorem (see, for example, \cite[Theorem 1.8]{AssaniWWET} for a proof).
\begin{lemma}\label{approximation}
	Let $(X, \mathcal{F}, \mu, T)$ be a measure-preserving system, and $a$ and $b$ be distinct integers. Let  $f_1, f_2 \in L^\infty(\mu)$. Suppose there exist two sequences of functions $(f_1^i)_i$ and $(f_2^i)_i$ in $L^\infty(\mu)$ such that $\norm{f_1^i}_{L^\infty(\mu)} < M$ for some constant $M > 0$ for any $i \in \NN$, $f_j^i \to f_j$ in $L^2(\mu)$-norm as $i \to \infty$ for each $j = 1, 2$, and for each $i$, there exists a set of full measure $X_i$ such that for any $x \in X_i$ and any $p \in \RR_k[\xi]$ for each $k \in \NN$, the averages
		\[ \frac{1}{N} \sum_{n=1}^N f_1^i(T^{an}x)f_2^i(T^{bn}x) e(p(n)) \]
		converge. Then there exists a set of full measure $X_{\infty} \subset X$ such that for any $x \in X_\infty$ and any $p \in \RR_k[\xi]$, the averages
		\[ \frac{1}{N} \sum_{n=1}^N f_1(T^{an}x)f_2(T^{bn}x) e(p(n))\]
		converge for each $k \in \NN$.
\end{lemma}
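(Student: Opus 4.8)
The plan is to show, for $\mu$-a.e.\ $x$ and every polynomial $p$, that the sequence $\big(W_N(f_1,f_2,x,p)\big)_N$ from $(\ref{polyDRWWavg})$ is Cauchy, by comparing it to the sequences $\big(W_N(f_1^i,f_2^i,x,p)\big)_N$, which converge for $x \in X_i$ by hypothesis. The algebraic input is the pointwise-along-orbits identity
\[ f_1(T^{an}x)f_2(T^{bn}x) - f_1^i(T^{an}x)f_2^i(T^{bn}x) = (f_1-f_1^i)(T^{an}x)\,f_2(T^{bn}x) + f_1^i(T^{an}x)\,(f_2-f_2^i)(T^{bn}x). \]
Since $|e(p(n))| = 1$ for every polynomial $p$ and every $n$, averaging this identity and applying the triangle inequality together with $\|f_2\|_{L^\infty(\mu)} < \infty$ and $\|f_1^i\|_{L^\infty(\mu)} < M$ gives, for \emph{every} $N$, $k$, and $p \in \RR_k[\xi]$,
\[ \left|W_N(f_1,f_2,x,p) - W_N(f_1^i,f_2^i,x,p)\right| \le \|f_2\|_{L^\infty(\mu)}\,G_i(x) + M\,H_i(x) =: R_i(x), \]
where $G_i(x) = \sup_{N\ge1}\frac1N\sum_{n=1}^N|(f_1-f_1^i)(T^{an}x)|$ and $H_i(x) = \sup_{N\ge1}\frac1N\sum_{n=1}^N|(f_2-f_2^i)(T^{bn}x)|$. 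The key feature is that $R_i$ depends neither on $p$, nor on $k$, nor on $N$.

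Next I would estimate $\|R_i\|_{L^2(\mu)}$. Applying the maximal inequality $(\ref{maxIneq})$ with $\alpha = 2$ to the measure-preserving transformation $T^a$ and the function $|f_1-f_1^i| \in L^\infty(\mu) \subset L^2(\mu)$ yields $\|G_i\|_{L^2(\mu)} \le 2\,\|f_1-f_1^i\|_{L^2(\mu)}$, and likewise $\|H_i\|_{L^2(\mu)} \le 2\,\|f_2-f_2^i\|_{L^2(\mu)}$ via $T^b$. (If $a = 0$ then $G_i(x) = |(f_1-f_1^i)(x)|$ and the bound holds trivially, and similarly for $b$.) Hence
\[ \|R_i\|_{L^2(\mu)} \le 2\,\|f_2\|_{L^\infty(\mu)}\,\|f_1-f_1^i\|_{L^2(\mu)} + 2M\,\|f_2-f_2^i\|_{L^2(\mu)} \longrightarrow 0 \quad (i \to \infty). \]
I would then pass to a subsequence $(i_\ell)_\ell$ with $\sum_\ell\|R_{i_\ell}\|_{L^2(\mu)} < \infty$, so that $\sum_\ell R_{i_\ell}(x) < \infty$, and in particular $R_{i_\ell}(x) \to 0$, for $\mu$-a.e.\ $x$; set $X_\infty := \big(\bigcap_\ell X_{i_\ell}\big) \cap \{x : R_{i_\ell}(x) \to 0\}$, a single set of full measure independent of $k$ and $p$.

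Finally, fix $x \in X_\infty$, $k \in \NN$, and $p \in \RR_k[\xi]$. Given $\varepsilon > 0$, choose $\ell$ with $R_{i_\ell}(x) < \varepsilon/3$; since $x \in X_{i_\ell}$ the sequence $\big(W_N(f_1^{i_\ell},f_2^{i_\ell},x,p)\big)_N$ converges, hence is Cauchy, so $\big|W_N(f_1^{i_\ell},f_2^{i_\ell},x,p) - W_{N'}(f_1^{i_\ell},f_2^{i_\ell},x,p)\big| < \varepsilon/3$ for $N,N'$ large. Combining this with the bound $\big|W_N(f_1,f_2,x,p) - W_N(f_1^{i_\ell},f_2^{i_\ell},x,p)\big| \le R_{i_\ell}(x)$ via the triangle inequality gives $\big|W_N(f_1,f_2,x,p) - W_{N'}(f_1,f_2,x,p)\big| \le 2R_{i_\ell}(x) + \varepsilon/3 < \varepsilon$ for $N,N'$ large, so $\big(W_N(f_1,f_2,x,p)\big)_N$ is Cauchy and therefore converges.

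I do not anticipate a serious obstacle here. The substance of the argument is the observation that the approximation error is controlled \emph{uniformly in $p$ and $k$}, because $|e(p(n))| = 1$ while one of the two factors is uniformly bounded, combined with the maximal ergodic inequality $(\ref{maxIneq})$ applied to the powers $T^a$ and $T^b$ (which requires only that these maps be measure preserving, not ergodic). The only points requiring mild care are the degenerate cases $a = 0$ or $b = 0$, and the bookkeeping needed to secure a single full-measure set valid simultaneously for all polynomials.
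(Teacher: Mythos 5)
Your proposal is correct and follows essentially the same route as the paper: the same three-term decomposition of $W_N(f_1,f_2,x,p)-W_N(f_1^i,f_2^i,x,p)$, the same observation that $|e(p(n))|=1$ makes the error controllable uniformly in $p$, $k$, and $N$ by maximal functions, and the same use of the maximal inequality $(\ref{maxIneq})$ with $\alpha=2$ to get an $L^2$ bound tending to $0$. The only (harmless) divergence is in the last step: the paper integrates the oscillation $\limsup_N - \liminf_N$ of $\mathrm{Re}\,W_N$ and $\mathrm{Im}\,W_N$ and applies Fatou's lemma to conclude it vanishes a.e., whereas you extract a subsequence with $\sum_\ell \|R_{i_\ell}\|_{L^2}<\infty$ and run a $3\varepsilon$ Cauchy argument — which, as a bonus, avoids having to integrate a supremum over $p$ and hence the measurability discussion of Lemma $\ref{measurablityLemma}$.
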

\begin{proof}
	For each $j = 1, 2$, we can write $f_j = (f_j - f_j^i) + f_j^i$ for each $i$, so we can rewrite the averages as follows:
	\begin{align}\label{4terms}
	 W_N(f_1, f_2, x, p) 
	&= \frac{1}{N} \sum_{n=1}^N f_1(T^{an}x)f_2(T^{bn}x)e(p(n)) \nonumber \\
	&= \frac{1}{N} \sum_{n=1}^N (f_1 - f_1^i)(T^{an}x)f_2(T^{bn}x)e(p(n)) + \frac{1}{N} \sum_{n=1}^Nf_1^i(T^{an}x)(f_2 - f_2^i)(T^{bn}x)e(p(n))\\
	&+ \frac{1}{N} \sum_{n=1}^Nf_1^i(T^{an}x)f_2^i(T^{bn}x)e(p(n)). \nonumber
	\end{align}
	Ultimately, we would like to show that there exists a set of full measure  $X_\infty \subset X$ such that for any $x \in X_\infty$,
	\begin{equation}\label{sup-inf} \mathcal{L}_R(W_N(f_1, f_2, x, p)) = 
	\sup_{p \in \RR_k[\xi]} \left( \limsup_{N \to \infty} Re (W_N(f_1, f_2, x, p)) - \liminf_{N \to \infty} Re (W_N(f_1, f_2, x, p)) \right) = 0, \end{equation}
	and
	\begin{equation}\label{sup-inf-Im} \mathcal{L}_I(W_N(f_1, f_2, x, p)) = 
	\sup_{p \in \RR_k[\xi]} \left( \limsup_{N \to \infty} Im (W_N(f_1, f_2, x, p)) - \liminf_{N \to \infty} Im (W_N(f_1, f_2, x, p)) \right) = 0. \end{equation}
	To show $(\ref{sup-inf})$, we first note that the third term on the right hand side of $(\ref{4terms})$ vanishes for $\mu$-a.e. $x \in X$ for each $i$ after applying $\mathcal{L}_R$ since we know that the averages converge for all $x \in \bigcap_{i=1}^\infty X_i$, which is a set of full measure, and for any $p \in \RR_k[\xi]$ and $i \in \NN$. To show the remaining terms vanish, we apply H\"{o}lder's inequality as well as the inequality $(\ref{maxIneq})$. For instance, for the first term of $(\ref{4terms})$, we have that
	\begin{align*} 
	\sup_{p \in \RR_k[\xi]}\left| \frac{1}{N} \sum_{n=1}^N(f_1 - f_1^i)(T^{an}x)f_2(T^{bn}x)e(p(n)) \right| 
	&\leq \frac{1}{N} \sum_{n=1}^N \left|(f_1 - f_1^i)(T^{an}x)f_2(T^{bn}x)\right| \\
	&\leq \norm{f_2}_{L^\infty(\mu)}\frac{1}{N} \sum_{n=1}^N\left|f_1 - f_1^i\right|(T^{an}x) \, .
	\end{align*}
	If we take supremum over $N$ on both sides, we would have
	\begin{align*} &\sup_{N \geq 1} \sup_{p \in \RR_k[\xi]}\left| \frac{1}{N} \sum_{n=1}^N(f_1 - f_1^i)(T^{an}x)f_2(T^{bn}x)e(p(n)) \right| \leq  \norm{f_2}_{L^\infty(\mu)} \left(\sup_{N \geq 1}  \frac{1}{N} \sum_{n=1}^N\left|f_1 - f_1^i\right|(T^{an}x) \right) , \end{align*}
	so we integrate both sides {(which is possible by Lemma $\ref{measurablityLemma}$)} and apply H\"{o}lder's inequality for the right hand side to obtain
	\begin{align*}
	&\int \sup_{N \geq 1} \sup_{p \in \RR_k[\xi]}\left|\frac{1}{N} \sum_{n=1}^N(f_1 - f_1^i)(T^{an}x)f_2(T^{bn}x)e(p(n)) \right| d\mu(x) \leq \norm{f_2}_{L^\infty(\mu)} \norm{\sup_{N \geq 1}  \frac{1}{N} \sum_{n=1}^N \left|f_1 - f_1^i\right|(T^{an}x)}_{L^2(\mu)}   \nonumber \, .
	\end{align*}
We apply the inequality $(\ref{maxIneq})$ for the case where $\alpha = 2$ to the $L^2(\mu)$-norm on the right hand side to obtain
\begin{align}\label{secondTerm}
&\int \sup_{N \geq 1} \sup_{p \in \RR_k[\xi]}\left|\frac{1}{N} \sum_{n=1}^N(f_1 - f_1^i)(T^{an}x)f_2(T^{bn}x)e(p(n)) \right| d\mu(x) \leq 2\norm{f_2}_{L^\infty(\mu)}\norm{f_1-f_1^i}_{L^2(\mu)}\, .
\end{align}

 By the similar argument as in the first term of $(\ref{4terms})$ (and recalling that $\norm{f_1^i}_{L^\infty(\mu)} \leq M$ for all $i$), we can also obtain an estimate for the second term:
	\begin{equation}\label{thirdTerm} \int \sup_{N \geq 1} \sup_{p \in \RR_k[\xi]}\left| \frac{1}{N} \sum_{n=1}^{N} f_1^i(T^{an}x)(f_2-f_2^i)(T^{bn}x)e(p(n)) \right| d\mu(x) \leq 2M\|f_2 - f_2^i\|_{L^2(\mu)}\, . \end{equation}
	We are now ready to verify $(\ref{sup-inf})$. We note that
	\begin{align*}
	0 &\leq \sup_{p \in \RR_k[\xi]} \left(\limsup_{N \to \infty} Re\left( W_N(f_1, f_2, x, p)\right) - \liminf_{N \to \infty} Re \left(W_N(f_1, f_2, x, p)\right) \right) \nonumber\\
	&\leq 2\liminf_{i \to \infty} \left(\sup_{N \geq 1} \sup_{p \in \RR_k[\xi]}\left| \frac{1}{N} \sum_{n=1}^N (f_1-f_1^i)(T^{an}x)f_2(T^{bn}x) e(p(n))\right| \right. \\
	&+ \left. \sup_{N \geq 1} \sup_{p \in \RR_k[\xi]}\left| \frac{1}{N} \sum_{n=1}^N f_1^i(T^{an}x)(f_2 - f_2^i)(T^{bn}x) e(p(n))\right| \right).
	\end{align*}
	According to the inequalities $ (\ref{secondTerm})$ and $(\ref{thirdTerm})$, the integral of each average in the right-hand side of the inequality above is bounded by a constant multiple of either $\| f_1 - f_1^i \|_{L^2(\mu)}$ or $\| f_2 - f_2^i \|_{L^2(\mu)}$. These norms converge to $0$ as $i \to \infty$. Using those inequalities together with Fatou's lemma, we obtain
	\begin{align}
	& \int \sup_{p \in \RR_k[\xi]} \left(\limsup_{N \to \infty} Re\left( W_N(f_1, f_2, x, p)\right) - \liminf_{N \to \infty} Re \left(W_N(f_1, f_2, x, p)\right) \right) d\mu \label{LHS}\\
	&\leq 2\liminf_{i \to \infty}\left(\int \sup_{N \geq 1} \sup_{p \in \RR_k[\xi]}\left| \frac{1}{N} \sum_{n=1}^N (f_1-f_1^i)(T^{an}x)f_2(T^{bn}x) e(p(n))\right| d\mu(x) \right. \nonumber \\
	&+ \left. \int \sup_{N \geq 1} \sup_{p \in \RR_k[\xi]}\left| \frac{1}{N} \sum_{n=1}^N f_1^i(T^{an}x)(f_2 - f_2^i)(T^{bn}x) e(p(n))\right| d\mu(x) \right) = 0. \nonumber
	\end{align}
	Since inside the integral of ($\ref{LHS}$) is nonnegative, $(\ref{sup-inf})$ is established for $\mu$-a.e. $x \in X$. Therefore, there exists a set of full measure $X_{R} \subset \bigcap_{i=1}^\infty X_i$ such that for any $x \in X_{R}$ and any polynomial $p \in \RR_k[\xi]$, the real part of the sequence $(W_N(f_1, f_2, x, p))_N$ converge for any $k \in \NN$.
	
	Similarly, we can show that $(\ref{sup-inf-Im})$ holds for $\mu$-a.e. $x \in \bigcap_{i=1}^\infty X_i$, so there exists a set of full measure $X_{I} \subset \bigcap_{i=1}^\infty X_i$ such that for any $x \in X_{I}$ and any polynomial $p \in \RR_k[\xi]$, the imaginary part of the sequence $(W_N(f_1, f_2, x, p))_N$ converge for any $k \in \NN$. So if we set $X_\infty = X_{R} \cap X_{I}$, we obtain the desired set of full measure. \end{proof}
To prove (2) of Theorem $\ref{Main Theorem}$, we first prove this for the case where $(X, \mathcal{F}, \mu, T)$ is an ergodic nilsystem, and $f_1$ and $f_2$ are both continuous functions on $X$; under these assumptions, the averages converge \textit{for all} $x \in X$. 
The key ingredient of this proof is Leibman's pointwise convergence theorem of polynomial actions on a nilsystem \cite{Leibman}, which is used to prove the following lemma.
\begin{lemma}\label{BothInZk+1}
	{Let $(X, \mathcal{F}, \mu, T)$ be a {$(k+1)$-step ergodic nilsystem. Suppose $f_1, f_2 \in \mathcal{C}(X)$}, and $a, b \in \ZZ$ such that $a \neq b$. Then for any $x \in X$ and $p \in \RR_k[\xi]$, the averages 
	\begin{equation}\label{generalCV} \frac{1}{N} \sum_{n=1}^N f_1(T^{an}x)f_2(T^{bn}x)e(p(n)) \end{equation}
	converge as $N \to \infty$.}
\end{lemma}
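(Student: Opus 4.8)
\emph{Proof proposal.} The plan is to realize the averages $(\ref{generalCV})$ as Cesàro averages of a single continuous function along a \emph{polynomial orbit} on one nilmanifold, and then to quote Leibman's pointwise convergence theorem \cite{Leibman}, for which the ergodicity and the step of the system play no role.

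Write $X = G/\Gamma$, where $G$ is a nilpotent Lie group (of step $k+1$) and $\Gamma$ a discrete cocompact subgroup, and let the transformation be the translation $Tx = gx$ by a fixed $g \in G$, so that $T^{m}x = g^{m}x$ for every $m \in \ZZ$. Fix $p \in \RR_k[\xi]$. Form the product group $G' = G \times G \times \RR$ with lattice $\Gamma' = \Gamma \times \Gamma \times \ZZ$; then $X' = G'/\Gamma' \cong X \times X \times \TT$ is again a nilmanifold (of step $k+1$, the extra factor $\RR$ being abelian). Consider the sequence $\mathbf{g} \colon \ZZ \to G'$ given by $\mathbf{g}(n) = (g^{an}, g^{bn}, p(n))$. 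Each coordinate is a polynomial sequence in its factor: $n \mapsto g^{an} = (g^{a})^{n}$ and $n \mapsto g^{bn}$ have degree $1$, while $n \mapsto p(n) \in \RR$ is an ordinary polynomial of degree $k$ in the abelian factor; since discrete derivatives in a direct product act coordinatewise, $\mathbf{g}$ is a polynomial sequence in $G'$. Define $F \in \mathcal{C}(X')$ by $F(y, z, \theta) = f_1(y) f_2(z) e(\theta)$, which is well defined because $\theta \mapsto e(\theta)$ descends to $\TT = \RR/\ZZ$ and is continuous since $f_1, f_2$ are. Setting $z_0 = (x, x, 0) \in X'$, for every $n$ we have $F(\mathbf{g}(n) z_0) = f_1(g^{an}x) f_2(g^{bn}x) e(p(n)) = f_1(T^{an}x) f_2(T^{bn}x) e(p(n))$, so the average in $(\ref{generalCV})$ equals $\frac{1}{N}\sum_{n=1}^{N} F(\mathbf{g}(n) z_0)$.

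It then remains to invoke Leibman's theorem \cite{Leibman}: for any nilmanifold $X' = G'/\Gamma'$, any polynomial sequence $\mathbf{g}$ in $G'$, any point $z_0 \in X'$, and any $F \in \mathcal{C}(X')$, the averages $\frac{1}{N}\sum_{n=1}^{N} F(\mathbf{g}(n) z_0)$ converge as $N \to \infty$ (indeed the orbit $(\mathbf{g}(n) z_0)_n$ is equidistributed on a finite union of sub-nilmanifolds of $X'$). Applying this to the real and imaginary parts of the complex-valued $F$ above yields convergence of $(\ref{generalCV})$ for \emph{every} $x \in X$ and every $p \in \RR_k[\xi]$.

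The content of the argument lies entirely in \cite{Leibman}; what remains on our side is bookkeeping, and here the points that need care are the following. First, one must recall from the Host--Kra structure theory \cite{HostKraNEA} that a $(k+1)$-step ergodic nilsystem is genuinely of the form $G/\Gamma$ with $T$ a group translation, so that adjoining the factors $G$ and $\RR$ keeps us within the class of nilmanifolds covered by \cite{Leibman} (no connectedness hypothesis is needed there, and $\Gamma\times\Gamma\times\ZZ$ is discrete and cocompact in $G\times G\times\RR$). Second, one must verify that $\mathbf{g}$ is a bona fide polynomial sequence in the sense of \cite{Leibman}, which is immediate from the coordinatewise description above. The one genuinely conceptual point — and the only obstacle beyond routine verification — is the introduction of the circle factor $\RR/\ZZ$ carrying the polynomial sequence $n \mapsto p(n)$: this is precisely the device that absorbs the degree-$k$ weight $e(p(n))$, which a linear orbit on $X \times X$ alone could not reproduce.
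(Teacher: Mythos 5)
Your proposal is correct and follows essentially the same route as the paper: both realize the average as a Ces\`{a}ro average of the continuous function $F(y,z,\theta)=f_1(y)f_2(z)e(\theta)$ along a polynomial orbit on the product nilmanifold $X\times X\times \TT$ and invoke Leibman's pointwise convergence theorem \cite{Leibman}. The only cosmetic difference is that the paper first treats exponents of the form $tq(n)$ via the rotation $R_t^{q(n)}$ and then recovers an arbitrary $p$ by writing $p=tq$, whereas you place $p(n)$ directly in the abelian factor; both devices address the same point about real (rather than integer) coefficients that the paper handles in its footnote.
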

\begin{proof}[Proof of Lemma $\ref{BothInZk+1}$] 
{{Let $t$ be any real number}. Suppose first that we fix a polynomial $q \in \RR[\xi]$. Suppose also that $X$ is a $(k+1)$-step nilsystem. Since we know that $\TT = \RR/\ZZ$ is a one-step nilmanifold, the system $(\TT, \mathcal{B}, m, R_t)$ is a nilsystem, where $\mathcal{B}$ is a Borel $\sigma$-algebra of $\TT$, $m$ is the usual Borel probability measure, and $R_t$ is a rotation by $t$ i.e. for any $\alpha \in (0, 1]$, we have $R_t(e(\alpha)) = e(\alpha + t)$. Thus, $(X^2 \times \TT, \mathcal{F}^2 \otimes \mathcal{B}, \mu^2 \otimes m)$ is a $k+1$-step nilmanifold. Suppose $F: X^2 \times \TT \to \CC$ for which
\[ F(x_1, x_2, e(\alpha)) = f_1(x_1)f_2(x_2)e(\alpha). \]
Then $F$ is continuous on $X^2 \times \TT$. 
Hence, for any $\alpha \in [0, 1)$, we see that
\[ \frac{1}{N} \sum_{n=1}^N F(T^{an}x_1, T^{bn}x_2, R_t^{{q(n)}}e(\alpha)) = \frac{e(\alpha)}{N} \sum_{n=1}^N f_1(T^{an}x_1)f_2(T^{bn}x_2) e({q(n)}t). \]
{Note that the left-hand side of the equation above converges by Leibman's convergence result for all $(x_1, x_2, e(\alpha)) \in X^2 \times \TT$ \cite[Theorems A, B]{Leibman}\footnote{In Leibman's paper, the polynomial sequences are defined for polynomials with integer coefficients. However, the cited theorems are proven for the case where one has a polynomial with real coefficients, as it is mentioned in \cite[\S3.13]{Leibman}, provided that this action makes sense. Since the element of the group corresponding to the rotation in $\TT$ belongs to the identity component of the group, a real polynomial exponential makes sense.}, so the averages in ($\ref{generalCV}$) converge for \textit{all} $(x_1, x_2, e(\alpha)) \in X^2 \times \TT$ for any $t \in \RR$}. In particular, it converges when $x_1 = x_2 = x$ and $\alpha = 0$, so we have shown that for any $x \in X$, the averages
\begin{equation}\label{AvgWitht} \frac{1}{N} \sum_{n=1}^N f_1(T^{an}x)f_2(T^{bn}x)e(tq(n)) \end{equation}
converge as $N \to \infty$.

Now we fix $t \in (0, 1)$. Then for any $k$-th degree polynomial $p$, there exists another $k$-th degree polynomial $q$ such that $p = tq$ (e.g. if $\displaystyle{p(n) = \sum_{l=0}^k c_ln^l}$, then we can set $\displaystyle{ q(n) = \sum_{l=0}^k c'_ln^l}$, where $c'_l = c_l/t$). Thus, we have
\[ \frac{1}{N} \sum_{n=1}^N f_1(T^{an}x)f_2(T^{bn}x)e(p(n)) = \frac{1}{N} \sum_{n=1}^N f_1(T^{an}x)f_2(T^{bn}x)e(tq(n)),  \]
and we know that the averages in the right hand side is $(\ref{AvgWitht})$, so the averages on the left hand side converge for all $x \in X$ as $N \to \infty$ . Hence, we have shown that the claim holds for the case where $f_1$ and $f_2$ are both continuous functions on an ergodic nilsystem.}
\end{proof}
Using the preceding lemma as well as Lemma $\ref{approximation}$, we prove (2) of Theorem $\ref{Main Theorem}$.
\begin{proof}[Proof of $(2)$ of Theorem $\ref{Main Theorem}$]
 By the structure theorem \cite[Theorem 10.1]{HostKraNEA}, the $k+1$-th Host-Kra-Ziegler factor is the inverse limit of a sequence of $k+1$-step ergodic nilsystems that are factor of $(X, \mathcal{F}, \mu, T)$. This implies that if $f_1 \in \mathcal{Z}_{k+1}$, then for any $k+1$-step ergodic nilsystem $(N, \mathcal{N}, \mu, T)$ that is a factor of $(X, \mathcal{F}, \mu, T)$, we have $\norm{\expec(f_1 | \mathcal{N})}_{L^\infty(\mu)} \leq \norm{f_1}_{L^\infty(\mu)}$. Hence, by Lemma $\ref{approximation}$, it suffices to show that the statement of the theorem holds for the case where $f_1$ and $f_2$ are bounded and measurable with respect to a $k+1$-step ergodic nilsystem $(N, \mathcal{N}, \mu, T)$ that is a factor of $(X, \mathcal{F}, \mu, T)$. But since we know that if $g_1$ and $g_2$ are continuous functions on $N$, then the averages
 \[ \frac{1}{N} \sum_{n=1}^N g_1(T^{an}x)g_2(T^{bn}x)e(p(n)) \]
 converge for all $x \in N$ and $p \in \RR_k[\xi]$ by Lemma $\ref{BothInZk+1}$. Furthermore, by density, there exist sequences of continuous functions $(\tilde{g}_1^i)_i$ and $(\tilde{g}_2^i)_i$ on $N$ such that $\tilde{g}_j^i \to f_j$ in $L^2(\mu)$ as $i \to \infty$ for each $j = 1, 2$. We can construct another sequence of continuous functions $(g_1^i)_i$ such that
 \[ g_1^i(x) = \left\{ \begin{array}{ll}
 \min\left(\tilde{g}_1^i(x), \norm{f_1}_{L^\infty(\mu)}\right) & \text{if } \tilde{g}_1^i(x) \geq 0, \\
 \max\left(\tilde{g}_1^i(x), -\norm{f_1}_{L^\infty(\mu)}\right) & \text{if } \tilde{g}_1^i(x) < 0, \\
 \end{array} \right. \]
 so that $g_1^i \to f_1$ in $L^2(\mu)$ as $i \to \infty$, and $\norm{g_1^i}_{L^\infty(\mu)} < \norm{f_1}_{L^\infty(\mu)}$ for each $i \in \NN$. Thus, we can apply Lemma $\ref{approximation}$ again (for the sequences $(g_1^i)_i$ and $(\tilde{g}_2^i)_i$) to show that the statement of the theorem holds for the case where $f_1$ and $f_2$ are bounded and measurable functions on $N$.
\end{proof}
Now we are ready to prove (3) of Theorem $\ref{Main Theorem}$ using (1) and (2).
\begin{proof}[Proof of $(3)$ of Theorem $\ref{Main Theorem}$]
Since any continuous function $\phi$ on $\TT$ can be approximated by a linear combination of complex trigonometric functions, it suffices to prove this claim by showing that the averages
\begin{equation}\label{ultimate} \frac{1}{N} \sum_{n=1}^N f_1(T^{an}x)f_2(T^{bn}x)e(p(n))\end{equation}
converge off a single null-set independent of $p$. First we find a single set of full measure independent of $p \in \RR_k[\xi]$ for which the averages converge. For each $j = 1, 2$, we write $f_j = \expec(f_j| \mathcal{Z}_{k+1}) + f_j^\perp$, where $f_j^\perp \in \mathcal{Z}_{k+1}^\perp$. By (1) of Theorem $\ref{Main Theorem}$, we merely need to show that the averages
\begin{equation}\label{projection} \frac{1}{N} \sum_{n=1}^N \expec(f_1| \mathcal{Z}_{k+1})(T^{an}x)\expec(f_2| \mathcal{Z}_{k+1})(T^{bn}x)e(p(n)) \end{equation}
converge on a set of full measure independent of $p \in \RR_k[\xi]$. By (2) of Theorem $\ref{Main Theorem}$, we know that there exists a set of full measure $X_{f_1, f_2, k}$ such that for any $x \in X_{f_1, f_2, k}$, the averages in $(\ref{projection})$ converges for all $p \in \RR_k[\xi]$. Thus, if we set
\[ X_{f_1, f_2} = \bigcap_{k=1}^\infty X_{f_1, f_2, k}, \]
then $X_{f_1, f_2}$ is a set of full measure independent for which the averages in $(\ref{ultimate})$ converge for all $x \in X_{f_1, f_2}$ and for all polynomials $p$ with real coefficients.
\end{proof}

\section{Corollaries}\label{sec:Corollaries}
One can directly prove the following weaker version of Theorem $\ref{Main Theorem}$. We originally proved this result before we obtained the proof of Theorem $\ref{Main Theorem}$, and used different approaches to prove a certain matter (e.g. Anzai's skew-product transformation on $\TT$ \cite{Anzai}). See \cite{WWDR_nil} for more detail.
\begin{cor}\label{WWDR_nil}
	Let $(X, \mathcal{F}, \mu, T)$ be a standard ergodic dynamical system, $a, b \in \ZZ$ such that $a \neq b$, and $f_1, f_2 \in L^2(X)$. Suppose $p(n)$ is a degree-$k$ polynomial with real coefficients, where $k \geq 1$. Let 
	\[W_N(f_1, f_2,x ,p, t) = \frac{1}{N} \sum_{n=1}^{N} f_1(T^{an}x)f_2(T^{bn}x) e^{2\pi i p(n) t}. \]
	\begin{enumerate}
		\item If either $f_1$ or $f_2$ belongs to $\mathcal{Z}_{k+1}^\perp$, then there exists a set of full measure $X_{f_1, f_2, p}$ such that for all $x \in X_{f_1, f_2, p}$,
		\begin{equation*}\label{UniformDWWP} \limsup_{N \to \infty} \sup_{t \in \RR} \left|W_N(f_1, f_2, x,p, t) \right| = 0. \end{equation*}
		\item If $f_1, f_2 \in \mathcal{Z}_{k+1}$, then there exists a set of full measure $X_{f_1, f_2, p}$ such that for all $x \in X_{f_1, f_2, p}$, the averages $W_N(f_1, f_2, x, p, t)$ converge for all $t \in \RR$.
	\end{enumerate}
\end{cor}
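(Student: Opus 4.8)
The plan is to obtain Corollary~\ref{WWDR_nil} directly from Theorem~\ref{Main Theorem}. The one observation needed is that, for the fixed degree-$k$ polynomial $p$, letting $t$ range over $\RR$ in $e^{2\pi i p(n)t}$ only produces the polynomial family $\{\,t\cdot p : t\in\RR\,\}\subset\RR_k[\xi]$, where $t\cdot p$ denotes the polynomial $\xi\mapsto t\,p(\xi)$ of degree $\leq k$ (recall from the proof of Lemma~\ref{measurablityLemma} that $\RR_k[\xi]$ is realised as $\{\sum_{j=0}^k c_j\xi^j:(c_0,\dots,c_k)\in\RR^{k+1}\}$, hence contains every polynomial of degree at most $k$, including $0$). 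Thus, in the notation of (\ref{polyDRWWavg}), $W_N(f_1,f_2,x,p,t)=W_N(f_1,f_2,x,t\cdot p)$, so that for every $x\in X$ and every $N\geq 1$,
\[
\sup_{t\in\RR}\bigl|W_N(f_1,f_2,x,p,t)\bigr| \;\leq\; \sup_{q\in\RR_k[\xi]}\bigl|W_N(f_1,f_2,x,q)\bigr|,
\]
and the left-hand side is measurable in $x$ by the argument of Lemma~\ref{measurablityLemma} (replace $\RR$ by the countable dense set $\QQ$).

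Granting this, the corollary for bounded $f_1,f_2\in L^\infty(\mu)$ is immediate: for part~(1), if $f_1\in\mathcal{Z}_{k+1}^\perp$ (the case $f_2\in\mathcal{Z}_{k+1}^\perp$ being symmetric), part~(1) of Theorem~\ref{Main Theorem} gives a full-measure set on which $\limsup_N\sup_{q\in\RR_k[\xi]}|W_N(f_1,f_2,x,q)|=0$, and the displayed domination then yields $\limsup_N\sup_{t\in\RR}|W_N(f_1,f_2,x,p,t)|=0$ there; for part~(2), if $f_1,f_2\in\mathcal{Z}_{k+1}$, part~(2) of Theorem~\ref{Main Theorem} gives a full-measure set on which $W_N(f_1,f_2,x,q)$ converges for \emph{every} $q\in\RR_k[\xi]$, so in particular $W_N(f_1,f_2,x,p,t)$ converges for all $t\in\RR$ on that same set. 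In both cases this set serves as $X_{f_1,f_2,p}$.

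The remaining task is to relax $L^\infty(\mu)$ to $L^2(\mu)$, and I would handle this by an approximation argument modelled on the proof of Lemma~\ref{approximation}. For part~(1) with $f_1\in\mathcal{Z}_{k+1}^\perp\cap L^2(\mu)$, pick $g_1\in\mathcal{Z}_{k+1}^\perp\cap L^\infty(\mu)$ with $\|f_1-g_1\|_{L^2(\mu)}$ as small as desired (truncate $f_1$ and subtract its conditional expectation onto $\mathcal{Z}_{k+1}$, which keeps it bounded and still in $\mathcal{Z}_{k+1}^\perp$). If $f_2\in L^\infty(\mu)$, split $W_N(f_1,f_2,x,p,t)=W_N(f_1-g_1,f_2,x,p,t)+W_N(g_1,f_2,x,p,t)$: the second summand has vanishing $\limsup$ a.e.\ by the bounded case, while $\sup_t|W_N(f_1-g_1,f_2,x,p,t)|\leq\|f_2\|_{L^\infty(\mu)}\,\frac1N\sum_{n=1}^N|f_1-g_1|(T^{an}x)$, so the maximal inequality (\ref{maxIneq}) with $\alpha=2$ gives
\[
\int\limsup_{N\to\infty}\sup_{t\in\RR}\bigl|W_N(f_1,f_2,x,p,t)\bigr|\,d\mu(x)\;\leq\;2\,\|f_2\|_{L^\infty(\mu)}\,\|f_1-g_1\|_{L^2(\mu)},
\]
and since the left-hand side does not depend on $g_1$, letting $g_1\to f_1$ shows it is $0$. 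For $f_2$ merely in $L^2(\mu)$ one inserts a further bounded approximation of $f_2$ and absorbs the extra error using the weak-type maximal estimate; this is the only mildly technical point. Part~(2) for $L^2(\mu)$ functions follows from its bounded case by the same splitting together with Lemma~\ref{approximation}. I do not expect a real obstacle here: the whole content of the corollary is already packaged in Theorem~\ref{Main Theorem}. For a proof that avoids Theorem~\ref{Main Theorem} altogether, as in \cite{WWDR_nil}, one would instead view $e(t\,p(n))$ as a coordinate of an orbit under an Anzai skew product over $\TT$ and run the van der Corput / Gowers--Host--Kra seminorm estimates behind Theorem~\ref{Thm-estimate} directly; there the delicate step is showing that $\mathcal{Z}_{k+1}$ is the correct characteristic factor.
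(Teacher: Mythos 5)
Your proposal is correct and its core is exactly the paper's proof: the paper's entire argument is the one-line observation that $tp$ is again a real polynomial of degree at most $k$, so both parts follow immediately from Theorem~\ref{Main Theorem} on its full-measure set. Your additional $L^2$-to-$L^\infty$ approximation discussion addresses the mismatch between the corollary's hypothesis $f_1, f_2 \in L^2(X)$ and the theorem's hypothesis $f_1, f_2 \in L^\infty(\mu)$ --- a point the paper's proof silently ignores --- and your sketch is plausible (the case where both functions are merely $L^2$ does require the weak-type maximal/Banach-principle argument you allude to), but none of that appears in the paper.
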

\begin{proof}
If $p$ is a $k$-th degree polynomial with real coefficients, then so is $tp$, so both (1) and (2) hold if $x$ belongs to the set of full measure obtained in Theorem $\ref{Main Theorem}$.
\end{proof}
The direct consequence of Corollary $\ref{WWDR_nil}$ is that the sequence $u_n = f_1(T^{an}x)f_2(T^{bn}x)$ is a universally good weight for the polynomial return time averages in norm.
\begin{cor}
Let $(X, \mathcal{F}, \mu, T)$ be an ergodic system, $a, b \in \ZZ$ such that $a \neq b$. Then there exists a set of full measure $\tilde{X}$ such that for any $x \in \tilde{X}$, a polynomial $p \in \ZZ[\xi]$, and for any measure-preserving system $(Y, \mathcal{G}, \nu, S)$ and $g \in L^\infty(\nu)$, the averages
\[ \frac{1}{N} \sum_{n=1}^N f_1(T^{an}x)f_2(T^{bn}x)g \circ S^{p(n)} \]
converge in $L^2(\nu)$.
\end{cor}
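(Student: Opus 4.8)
The plan is to reduce the claim, via the spectral theorem for the Koopman operator of $S$, to part (3) of Theorem \ref{Main Theorem} (equivalently to Corollary \ref{WWDR_nil}). First I would let $\tilde X$ be the set of full measure furnished by Theorem \ref{Main Theorem}(3) for the functions $f_1, f_2$ and the given integers $a, b$: this set depends on nothing else, and on it the averages $\frac{1}{N}\sum_{n=1}^N f_1(T^{an}x) f_2(T^{bn}x)\phi(q(n))$ converge for every continuous $\phi : \TT \to \CC$ and every real-coefficient polynomial $q$. Then I would fix $x \in \tilde X$, an arbitrary measure-preserving system $(Y, \mathcal{G}, \nu, S)$, a function $g \in L^\infty(\nu)$, a polynomial $p \in \ZZ[\xi]$, and set $u_n = f_1(T^{an}x) f_2(T^{bn}x)$ and $A_N = \frac{1}{N}\sum_{n=1}^N u_n\, g\circ S^{p(n)} \in L^2(\nu)$; the goal is then to show that $(A_N)_N$ converges in $L^2(\nu)$.

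The key step is a spectral identification. Let $\sigma_g$ be the spectral measure of $g$ with respect to the unitary operator $g \mapsto g\circ S$, a finite positive Borel measure on $\TT$ satisfying $\la g\circ S^m, g\ra_{L^2(\nu)} = \int_\TT e(m\theta)\,d\sigma_g(\theta)$ for all $m \in \ZZ$, and let $\mathcal{H}_g$ be the cyclic subspace of $L^2(\nu)$ generated by $g$. Since $p$ has integer coefficients, every $A_N$ lies in $\mathcal{H}_g$, and the standard isometry $\mathcal{H}_g \cong L^2(\TT, \sigma_g)$ (sending $g\circ S^m$ to the character $\theta \mapsto e(m\theta)$) carries $A_N$ to the function $\theta \mapsto \frac{1}{N}\sum_{n=1}^N u_n e(p(n)\theta) = W_N(f_1, f_2, x, p, \theta)$, which is exactly the polynomial double-recurrence Wiener--Wintner average appearing in Corollary \ref{WWDR_nil}. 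Thus $(A_N)_N$ converges in $L^2(\nu)$ if and only if $\big(W_N(f_1, f_2, x, p, \cdot)\big)_N$ converges in $L^2(\TT, \sigma_g)$.

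To finish I would observe that for each fixed $\theta \in \TT$ the polynomial $t \mapsto \theta\, p(t)$ has real coefficients, so by the defining property of $\tilde X$ (Theorem \ref{Main Theorem}(3), applied with $\phi(s) = e(s)$ and the polynomial $\theta\, p$) the scalar sequence $W_N(f_1, f_2, x, p, \theta)$ converges as $N\to\infty$; since it is bounded by $\norm{f_1}_{L^\infty(\mu)}\norm{f_2}_{L^\infty(\mu)}$ uniformly in $N$ and $\theta$, and $\sigma_g$ is a finite measure, the bounded convergence theorem upgrades this to convergence of $W_N(f_1, f_2, x, p, \cdot)$ in $L^2(\TT, \sigma_g)$, hence of $(A_N)_N$ in $L^2(\nu)$.

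I do not expect a serious obstacle here: the content is essentially the observation that the spectral identity converts norm-convergence of the weighted return-time averages into the pointwise-in-$\theta$ convergence already supplied by Theorem \ref{Main Theorem}(3). The only points to keep an eye on are that the full-measure set is genuinely independent of $(Y, \mathcal{G}, \nu, S)$, $g$, and $p$ (which is built into Theorem \ref{Main Theorem}(3), as it produces a single set valid for all continuous $\phi$ and all real polynomials), and that the substitution $p \in \ZZ[\xi] \rightsquigarrow \theta p \in \RR[\xi]$ keeps us within the scope of that theorem. (One could instead try to argue directly through Host--Kra characteristic factors of $(Y, \mathcal{G}, \nu, S)$ together with Leibman's theorem for polynomial orbits on $X^2 \times Y$, but that route is considerably more involved and is unnecessary here.)
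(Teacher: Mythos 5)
Your argument is correct and is exactly the route the paper intends: its entire proof reads ``This is a direct application of Theorem \ref{Main Theorem} and the spectral theorem,'' and your write-up simply supplies the details (the cyclic-subspace isometry $\mathcal{H}_g \cong L^2(\TT,\sigma_g)$, the identification of the image of the averages with $W_N(f_1,f_2,x,p,\theta)$, and bounded convergence over the finite measure $\sigma_g$). The only point worth recording explicitly is that the pointwise-in-$\theta$ limit is supplied by the single full-measure set of Theorem \ref{Main Theorem}(3) applied to the real polynomial $\theta p$, which you do correctly.
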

\begin{proof}
This is a direct application of Theorem $\ref{Main Theorem}$ and the spectral theorem.
\end{proof}
\section*{Acknowledgment}
We thank Benjamin Weiss for suggesting to look at the extension of the double recurrence Wiener-Wintner result to nilsequences during the 2014 Ergodic Theory Workshop at UNC Chapel Hill. We also thank El Houcein El Abdalaoui for his interest in this problem. Lastly, we thank the anonymous referee for his/her suggestions and comments.

\bibliographystyle{plain}
\bibliography{RM_Bib}

\end{document}